\documentclass[12pt, reqno]{amsart} 

\usepackage[utf8]{inputenc} 

\usepackage{geometry} 
\geometry{letterpaper} 
\usepackage{bbm}
\usepackage{verbatim}
\usepackage{graphicx}
\usepackage{amsmath}	
\usepackage{amssymb}	
\usepackage{amsthm}
\usepackage{rotating}	
\usepackage{subfigure}	
\usepackage{url}		
\usepackage[justification=raggedright,
singlelinecheck=false]{}	
\usepackage{indentfirst}	
\usepackage[splitrule]{footmisc}
\usepackage{tikz}

\tikzstyle{connector} = [->]
\usetikzlibrary{calc}
\pgfdeclarelayer{edgelayer}
\pgfdeclarelayer{nodelayer}
\pgfsetlayers{edgelayer,nodelayer,main}

\usepackage{graphicx} 

\usepackage{booktabs} 
\usepackage{array} 
\usepackage{paralist} 
\usepackage{verbatim} 
\usepackage{subfig} 

\newtheorem{thm}{Theorem}[section]
\newtheorem{lem}[thm]{Lemma}
\newtheorem{remark}{Remark}
\newtheorem{remark*}{Remark}

\newtheorem{conj}{Conjecture}

\theoremstyle{definition}
\newtheorem{defi}{Definition}

\newtheorem{probl}{Problem}

\def\NN{{\mathbb N}}

\def\cH{H}
\def\cP{\mathcal{P}_\ell^{(r)}}
\def\fP{L_\ell^{(r)}}
\def\ffP{L^{(r)}}

\newcommand{\floor}[1]{\left\lfloor #1 \right\rfloor}
\newcommand{\ceil}[1]{\left\lceil #1 \right\rceil}
\newcommand{\bracks}[1]{\left( #1 \right)}

\newcommand{\ex}[1]{\operatorname{ex}(#1)}
\newcommand{\exr}[1]{\operatorname{ex}_r(#1)}
\newcommand{\extwo}[1]{\operatorname{ex}_2(#1)}
\newcommand{\Exr}[1]{\operatorname{Ex}_r(#1)}

\title{Tur\'an Numbers for Forests of Paths in Hypergraphs}

\begin{document}

\author[N. Bushaw]{Neal Bushaw}

\author[N. Kettle]{Nathan Kettle}

\address{Neal Bushaw\hfill\break
School of Mathematical and Statistical Sciences, Arizona State University, Tempe, AZ 85287, USA}
\email{bushaw@asu.edu}

\address{Nathan Kettle\hfill\break
Department of Pure Mathematics and Mathematical Statistics, University of Cambridge, Cambridge CB3 0WB, UK}
\email{n.kettle@dpmms.cam.ac.uk}

\begin{abstract}
The \emph{Tur\'an number} of an $r$-uniform hypergraph $H$ is the maximum number of edges in any $r$-graph on $n$ vertices which does not contain $H$ as a subgraph. Let $\mathcal{P}_{\ell}^{(r)}$ denote the family of $r$-uniform loose paths on $\ell$ edges, $\mathcal{F}(k,l)$ denote the family of hypergraphs consisting of $k$ disjoint paths from $\mathcal{P}_{\ell}^{(r)}$, and $\fP$ denote an $r$-uniform linear path on $\ell$ edges. We determine precisely $\exr{n;\mathcal{F}(k,l)}$ and $\exr{n;k\cdot\fP}$, as well as the Tur\'an numbers for forests of paths of differing lengths (whether these paths are loose or linear) when $n$ is appropriately large dependent on $k,l,r$, for $r\geq 3$. Our results build on recent results of F\"uredi, Jiang, and Seiver who determined the extremal numbers for individual paths, and provide more hypergraphs whose Turan numbers are exactly determined.
\end{abstract}
\maketitle
\section{Introduction and Background}
Extremal graph theory is that area of combinatorics which is concerned with finding the largest, smallest, or otherwise optimal structures with a given property. Often, the area is concerned with finding the largest (hyper)graph avoiding some subgraph. We build on earlier work of F\"uredi, Jiang, and Seiver \cite{FurJiaSei}, who determined the extremal numbers when the forbidden hypergraph is a single linear path or a single loose path. In this paper, we determine precisely the exact Tur\'an numbers when the forbidden hypergraph is a forest of loose paths, or a forest of linear paths; our main results appear in Section \ref{sec:main}. This is one of only a few papers which gives exact Tur\'an numbers for an infinite family of hypergraphs; in this case, several such families. 

The \emph{Tur\'an number}, or \emph{extremal number}, of an $r$-uniform hypergraph $F$ is the maximum number of edges in any $r$-graph $H$ on $n$ vertices which does not contain $F$ as a subgraph. This is a natural generalization of the classical Tur\'an number for 2-graphs; we restrict ourselves to the case of $r$-uniform hypergraphs, as allowing the extremal number to count edges of different sizes obscures the true extremal structure.

Throughout, we use standard terminology and notation (see, e.g., \cite{MGT}). A \emph{hypergraph} is a pair $H\,=\,(V,E)$ consisting of a set $V$ of vertices and a set $E\subseteq\mathcal{P}(V)$ of edges. If $E\subseteq\binom{V}{r}$, then $H$ is an $r$-\emph{uniform hypergraph}; in this paper, we will restrict ourselves to this setting. If $|V|\,=\,n$, we will assume without loss of generality that $V\,=\,[n]\,=\,\{1,2,\ldots,n\}$. For two hypergraphs $G$ and $H$, we write $G\subseteq H$ if there is an injective homomorphism from $G$ into $H$. By disjoint, we will always mean \emph{vertex} disjoint; we use $G\cup H$ to denote the disjoint union of (hyper)graphs $G$ and $H$. Similarly, for $k\in\NN$, we use $k\cdot G$ to denote $k$ (vertex-)disjoint copies of $G$. We also make use of the indicator function;  $\mathbbm{1}_{E}=\begin{cases} 1 \textrm{ if $E$ holds,}\\0 \textrm{ else.}\end{cases}$.

Developing an understanding of the Tur\'an numbers and extremal graphs for a general $r$-graph $F$ is a classical and long-standing problem in extremal graph theory. The field began to take off in the 1940s, in the case of 2-graphs, when P\'al Tur\'an determined the extremal numbers for complete graphs of all orders; it is through this result that Tur\'an's name became synonymous with the field.

While this problem is well solved up to asymptotics when the forbidden graph has chromatic number at least three by the Erd\H{o}s-Stone Theorem \cite{ErdosStone}, things remain much murkier for bipartite graphs. We discuss this in Section \ref{sec:graphs}; first, we formally define the Tur\'an number for $r$-graphs as follows.

\begin{defi}
The \emph{$r$-uniform hypergraph Tur\'an Number}, or \emph{extremal number}, of a family $\mathcal{F}$ of $r$-uniform hypergraphs is defined as the following.
\[
 \exr{n;\mathcal{F}}\,=\,max\{|E(\cH)|:|V(\cH)|\,=\,n,\forall F\in\mathcal{F},\,F\not\subseteq H\}.
\]
\end{defi}

For a single hypergraph $F$, we will often write $\exr{n;F}$ for $\exr{n;\{F\}}$. We modify this definition slightly for lists of hypergraphs. As opposed to the above definition for a family of hypergraphs, where any member of the family is forbidden, here we are forbidding disjoint copies of all graphs in the list from appearing simultaneously.

\begin{defi}
The \emph{$r$-uniform hypergraph Tur\'an Number} of a list of $r$-uniform hypergraphs $F_1,F_2,\ldots,F_k$ is defined as the following.
 \[\exr{n;F_1,F_2,\ldots,F_k}\,=\,\exr{n;F_1\cup F_2\cup\ldots\cup F_k}.\]
\end{defi}

A hypergraph $H$ is called \emph{extremal for $F$} if $H$ is $F$-free and $|E(H)|\,=\,\exr{n;F}$; we denote by $\Exr{n;F}$ the family of $n$ vertex graphs which are extremal for $F$; similarly, $H$ is extremal for the list $F_1,F_2,\ldots,F_k$ if it does not contain disjoint copies of all graphs in the list and $|E(H)|\,=\,\exr{n;F_1,F_2,\ldots,F_k}$.

\subsection{Background for Graphs}\label{sec:graphs}
Before discussing paths in hypergraphs, on which this paper focuses, we discuss briefly the related results for paths in graphs of which the results in Section \ref{sec:main} are generalizations. We'll make use of the standard notation $\ex{n,F}=\extwo{n,F}$.

In 1959, Erd\H{o}s and Gallai proved the following result giving the extremal numbers for paths of a given length \cite{EG}. We note that the bound in Theorem~\ref{erdosgallai} is attained by taking disjoint copies of $K_{\ell-1}$. This extremal construction is unique as long as $n$ is divisible by $\ell-1$, and gives a tight bound in this case.

\begin{thm}[Erd\H{o}s-Gallai, 1959]
\label{erdosgallai}
For any $n,\ell\in\NN$, 
\[\ex{n;P_\ell}\leq\left(\frac{\ell-2}{2}\right)n.\]
\end{thm}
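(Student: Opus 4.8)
The plan is to prove the contrapositive: every graph $G$ on $n$ vertices with $e(G) > \frac{\ell-2}{2}\,n$ contains a copy of $P_\ell$. One may assume $\ell \geq 3$, the cases $\ell \leq 2$ being trivial.

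The first step is a \emph{cleaning} reduction: as long as $G$ has a vertex $v$ with $\deg(v)\leq\frac{\ell-2}{2}$, delete it. Each such deletion changes the quantity $e(G)-\frac{\ell-2}{2}|V(G)|$ by $\frac{\ell-2}{2}-\deg(v)\geq 0$, so this quantity stays positive throughout; hence the process terminates at a \emph{nonempty} induced subgraph $G'$ (edgeless graphs violate positivity) satisfying $\delta(G')>\frac{\ell-2}{2}$, i.e.\ $\delta(G')\geq\lceil\tfrac{\ell-1}{2}\rceil$, and $e(G')>\frac{\ell-2}{2}|V(G')|$. It now suffices to find $P_\ell$ in $G'$. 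Summing over the components $C$ of $G'$, the inequality $\sum_C e(C)=e(G')>\frac{\ell-2}{2}\sum_C|V(C)|$ yields a component with $e(C)>\frac{\ell-2}{2}|V(C)|$; combined with $e(C)\leq\binom{|V(C)|}{2}$ this forces $|V(C)|\geq\ell$. So the problem reduces to finding $P_\ell$ in a connected graph $C$ with $|V(C)|\geq\ell$ and $\delta(C)\geq\frac{\ell-1}{2}$.

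This last step, a rotation argument, is the core. Take a longest path $P=v_0v_1\cdots v_p$ in $C$; since $|V(C)|\geq\ell\geq 3$ and $C$ is connected one has $p\geq 2$, and it remains to rule out $p\leq\ell-2$. Maximality of $P$ forces $N(v_0)\subseteq\{v_1,\dots,v_p\}$ and $N(v_p)\subseteq\{v_0,\dots,v_{p-1}\}$, so $A=\{i:v_iv_p\in E(C)\}$ and $B=\{i:v_{i+1}v_0\in E(C)\}$ are subsets of $\{0,1,\dots,p-1\}$ with $|A|+|B|=\deg(v_p)+\deg(v_0)\geq 2\lceil\tfrac{\ell-1}{2}\rceil\geq\ell-1>p$; hence $A\cap B\neq\emptyset$. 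Any $i\in A\cap B$ gives a cycle $v_0v_1\cdots v_iv_pv_{p-1}\cdots v_{i+1}v_0$ through all of $v_0,\dots,v_p$. As $|V(C)|\geq\ell\geq p+2$ and $C$ is connected, some vertex $u\notin\{v_0,\dots,v_p\}$ is adjacent to this cycle; deleting an appropriate cycle-edge at that attachment point and appending $u$ gives a path on $p+2$ vertices, contradicting maximality of $P$. Hence $p+1\geq\ell$ and $C\supseteq P_\ell$, so $G\supseteq P_\ell$.

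The main obstacle is this final rotation step, and within it the two sensitive points are: (i) the minimum-degree bound $\delta(C)\geq\frac{\ell-1}{2}$ must be exploited tightly — via the parity estimate $2\lceil\tfrac{\ell-1}{2}\rceil\geq\ell-1$ together with $p\leq\ell-2$ — to guarantee $|A|+|B|>p$ for \emph{every} candidate longest path; and (ii) one must know the chosen component has strictly more vertices than the rotation cycle, which is precisely the role of the inequality $|V(C)|\geq\ell$ established in the second step. The cleaning reduction and the observation that a too-dense component must be large are routine, as is the bookkeeping ruling out short longest paths.
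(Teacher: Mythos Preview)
Your proof is correct and is essentially the classical argument for the Erd\H{o}s--Gallai theorem: reduce to minimum degree $>\tfrac{\ell-2}{2}$ by greedy vertex deletion, pass to a dense component (which must then have at least $\ell$ vertices), and run the standard rotation to produce a Hamiltonian cycle on a longest path, then extend via connectivity. All the delicate points --- the parity bookkeeping $2\lceil\tfrac{\ell-1}{2}\rceil\ge\ell-1$, the pigeonhole on $A,B\subseteq\{0,\dots,p-1\}$, and the use of $|V(C)|\ge\ell>p+1$ to find an external neighbour of the cycle --- are handled cleanly.

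However, there is nothing to compare against: the paper does not prove Theorem~\ref{erdosgallai}. It is quoted in Section~\ref{sec:graphs} purely as background, with attribution to \cite{EG}, to motivate the hypergraph results that form the paper's actual content. So your proof stands on its own as a correct, self-contained verification of a cited result, but it neither agrees with nor departs from anything the authors wrote.
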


We note that a path can be viewed as an extreme kind of tree; it is the tree with largest diameter for its number of vertices. The opposite extreme is the star; here the diameter is only two. Forbidding the star $S_\ell$ with $\ell$ leaves is, in fact, simply imposing a maximum degree condition, and so $\ex{n;S_\ell}\leq\left(\frac{\ell-2}{2}\right)n$. This bound is tight, with the extremal graphs being all $(\ell-2)$-regular graphs. For general trees, this result is notoriously difficult and is known as the  Erd\H{o}s-S\'os Conjecture \cite{ErdosSos}.

\begin{conj}[Erd\H{o}s-S\'os, 1963]
\label{erdossosconj}
For any tree $T$ on $\ell$ vertices, $\ex{n;T}\,\leq\,\left(\frac{\ell-2}{2}\right)n$.
\end{conj}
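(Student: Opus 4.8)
\emph{A caveat.} The statement above is the Erd\H{o}s--S\'os Conjecture, which is still open in general; what follows is therefore a description of the natural line of attack, and of exactly where it stalls, rather than a complete proof.

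The plan is to argue by contradiction. Suppose $G$ is a $T$-free graph on $n$ vertices with $|E(G)| > \frac{\ell-2}{2}n$, chosen with the fewest edges among all such counterexamples. If some vertex $v$ had $d(v) \le \frac{\ell-2}{2}$, then $G-v$ would have $n-1$ vertices and more than $\frac{\ell-2}{2}(n-1)$ edges, hence by minimality would already contain $T$ — contradiction, since $G-v \subseteq G$. So we may assume $\delta(G) \ge \lceil \tfrac{\ell-1}{2}\rceil$. At the opposite extreme, if $\delta(G) \ge \ell-1$ we finish immediately: order the vertices $v_1,\dots,v_\ell$ of $T$ so that each $v_i$ with $i\ge 2$ has exactly one neighbour among $v_1,\dots,v_{i-1}$, and embed them in this order; when $v_i$ is placed, its unique earlier neighbour has been sent to some $u$, at most $\ell-2$ vertices of $G$ other than $u$ are occupied, and $u$ has at least $\ell-1$ neighbours, so a free image for $v_i$ exists. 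Thus the entire difficulty is the factor of two between the average-degree hypothesis, which only guarantees minimum degree about $\ell/2$ in a subgraph, and the minimum degree $\ell-1$ that makes greedy embedding automatic.

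To close this gap I would decompose $T$ rather than $G$. Root $T$ at a centroid and cut it at an appropriate edge into two subtrees $T_1,T_2$ with $|V(T_i)| \approx \ell/2$; embed $T_1$ first into a well-chosen part of $G$, then extend across the cut edge and embed $T_2$ into fresh vertices, using that the relevant vertices of $G$ retain roughly $\ell/2$ neighbours outside the small set already used. Making this precise requires the right notion of ``good part'' of $G$: one should pass to a subgraph that is not merely of high average degree but also suitably connected or expanding, so that the cut edge of $T$ can always be realised and so that the embedding of $T_2$ is not trapped inside the image of $T_1$. The tools I would try to combine are longest-path / DFS arguments in the spirit of the proof of Theorem~\ref{erdosgallai}, the fact that a graph of average degree $> \ell-2$ contains a suitably connected subgraph of comparable average degree, and the structured cases that are already known and can serve as base cases or sanity checks: paths (Theorem~\ref{erdosgallai}), stars (a pure degree condition), spiders, brooms, caterpillars, trees of bounded diameter, and — via the regularity method together with stability — all trees once both $\ell$ and $n$ are large (Ajtai--Koml\'os--Simonovits--Szemer\'edi).

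The main obstacle is precisely the step glossed over above: ensuring the greedy/decomposition embedding never runs out of room. Average degree $> \ell-2$ forces only minimum degree about $\ell/2$ in a subgraph, and a tree on $\ell$ vertices can simultaneously have long ``thin'' stretches (pathlike parts whose embedding consumes vertices without benefiting from high degrees elsewhere) and dense ``bushy'' parts (starlike vertices needing many free neighbours at once). Controlling both obstructions for an \emph{arbitrary} tree, rather than for the structured families listed, is exactly what has kept the conjecture open, so I would not expect this plan to yield more than the known partial results without a genuinely new idea.
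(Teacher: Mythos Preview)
Your caveat is exactly right, and in fact matches the paper: the Erd\H{o}s--S\'os statement is presented there as Conjecture~\ref{erdossosconj}, not as a theorem, and the paper offers no proof. It only remarks that a proof for very large trees has been announced by Ajtai, Koml\'os, Simonovits, and Szemer\'edi, and then uses the conjecture as a hypothesis in Theorem~\ref{thm:treethm}. So there is nothing to compare your attempt against --- your discussion of the natural minimum-degree reduction, the factor-of-two gap, and the known partial cases is an accurate summary of why the problem is hard, but the paper itself makes no attempt at a proof and none was expected of you.
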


In 2008, a proof of this conjecture was announced for very large trees by Ajtai, Koml\'os, Simonovits, and Szemer\'edi; this will appear in a series of upcoming papers \cite{AKSS1, AKSS3, AKSS2}. A sketch of this result can be found in a recent survey of F\"uredi and Simonovits \cite{FSSurvey}. For small trees, however, the conjecture is largely open. For a survey of other Tur\'an results for connected bipartite graphs, see, e.g., \cite{AKS, EGT, BushawThesis, BushawKettle}.

In \cite{BushawKettle}, the present authors determined precisely the extremal numbers for forests where each component has the same number of vertices in each bipartite class (such a forest is called \emph{equibipartite}), assuming the Erd\H{o}s-S\'os Conjecture holds for those trees in the forest, as well as the extremal numbers for forests of paths of the same odd length. Our main results in this paper give the hypergraph versions of these theorems for forests of (linear or loose) paths. For comparison, these graph results are included below. 

\begin{thm}[B.-K, 2011]
\label{thm:p3thm}
For $n\geq 7k$, the following holds.
\[\ex{n;k\cdot P_3}\,=\,\binom{k-1}{2}+(n-k+1)(k-1)+\floor{\frac{n-k+1}{2}}\]
\end{thm}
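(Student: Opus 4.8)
My plan is to prove the matching upper bound $\ex{n;k\cdot P_3}\le f(n,k):=\binom{k-1}{2}+(n-k+1)(k-1)+\floor{(n-k+1)/2}$ by induction on $k$, the lower bound coming from the natural construction: take $k-1$ vertices spanning a clique, join every one of them to each of the remaining $n-k+1$ vertices, and put a maximum matching on those $n-k+1$ vertices. Since the last $n-k+1$ vertices induce a $P_3$-free graph, every copy of $P_3$ must meet the set of $k-1$ clique vertices, so no $k$ disjoint copies occur; the edge count is exactly $f(n,k)$. The base case $k=1$ of the induction is just the fact that a $P_3$-free graph is a matching, hence has at most $\floor{n/2}=f(n,1)$ edges. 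For $k\ge 2$, let $G$ be any $k\cdot P_3$-free graph on $n\ge 7k$ vertices; I would bound $e(G)$ by splitting on the maximum degree of $G$.

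If $G$ has a vertex $v$ with $\deg_G(v)\ge 3k-1$, then $G-v$ is $(k-1)\cdot P_3$-free: otherwise $G-v$ contains $k-1$ disjoint copies of $P_3$ spanning a set $W$ with $|W|=3(k-1)$, and since $\deg_G(v)\ge 3k-1>|W|$ the vertex $v$ has at least two neighbours outside $W\cup\{v\}$, which together with $v$ form a $P_3$ disjoint from the rest --- giving $k$ disjoint copies in $G$, a contradiction. As $n-1\ge 7(k-1)$, the inductive hypothesis yields $e(G-v)\le f(n-1,k-1)$, so $e(G)\le e(G-v)+\deg_G(v)\le f(n-1,k-1)+(n-1)$. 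A direct computation --- the floor terms of $f(n-1,k-1)$ and of $f(n,k)$ coincide because $(n-1)-(k-1)+1=n-k+1$ --- shows $f(n-1,k-1)+(n-1)=f(n,k)$, which closes this case with room to spare for the constant.

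The remaining case is $\Delta(G)\le 3k-2$, and this is where the real work lies. I would fix a maximum collection of pairwise vertex-disjoint copies of $P_3$ in $G$; since $G$ is $k\cdot P_3$-free there are $t\le k-1$ of them, spanning a vertex set $S$ of size $3t$, and by maximality $R:=G-S$ contains no $P_3$ and so is a matching with $e(R)\le\floor{(n-3t)/2}\le n/2$. Then $e(G)=e(G[S])+e(S,R)+e(R)$, and the point is that the maximality of the collection severely restricts the edges incident to $S$: for instance, if a path $a\,b\,c$ of the collection had endpoint $a$ adjacent to two vertices $x,y\in R$ and endpoint $c$ adjacent to some $z\in R\setminus\{x,y\}$, one could replace $a\,b\,c$ by the disjoint paths $x\,a\,y$ and $b\,c\,z$ (all inside $S\cup R$) and enlarge the collection. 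Since $|R|\ge n-3(k-1)\ge 4k$ is large, rerouting arguments of this kind --- applied to the three possible roles of the vertices of each path, and to pairs of paths for the edges inside $G[S]$ --- force the number of edges meeting $S$ to be $O(k^2)$ with a small constant rather than the trivial $|S|\Delta=O(k^2)$ with a large one. This makes $e(G)\le n/2+O(k^2)$, which is strictly below $f(n,k)\ge (n-k+1)(k-1)+\floor{(n-k+1)/2}$ as soon as $n\ge 7k$.

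I expect this bounded-degree case to be the main obstacle. The crude estimate $e(G)\le\tfrac12(3k-2)n$ available from $\Delta(G)\le 3k-2$ alone is not enough --- it proves the theorem only for $n=\Omega(k)$ with a constant larger than $7$ --- so one genuinely has to extract the full strength of the maximality of the $P_3$-packing, balancing $e(G[S])$ against $e(S,R)$ carefully (a dense $G[S]$ leaves little degree budget for $S$--$R$ edges, and vice versa). A secondary nuisance is parity: the floor functions, and whether $n-k+1$ is even, must be tracked consistently when comparing with $f(n,k)$ and when lining up the base case with the high-degree case.
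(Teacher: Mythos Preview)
The paper does not prove this statement: Theorem~\ref{thm:p3thm} is quoted from the authors' earlier graph paper \cite{BushawKettle} purely as background for the hypergraph results, and no argument for it appears anywhere in the present manuscript. So there is no ``paper's own proof'' to compare against here.

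That said, your proposal is the natural attack and, as far as the completed portion goes, it is correct. The lower-bound construction is the right one, and your high-degree case is clean: if $\deg(v)\ge 3k-1$ then $G-v$ is $(k-1)\cdot P_3$-free, the induction applies since $n-1\ge 7(k-1)$, and the identity $f(n-1,k-1)+(n-1)=f(n,k)$ holds on the nose (your floor-matching observation is exactly right). This part would go through verbatim.

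Where the proposal is still only a sketch is precisely where you flag it: the bounded-degree case $\Delta(G)\le 3k-2$. You correctly note that the trivial bound $e(G[S])+e(S,R)\le\binom{3(k-1)}{2}+3(k-1)(3k-2)$ is too weak against $f(n,k)$ at $n=7k$, so real use of the maximality of the $P_3$-packing is required. Your sample rerouting (two neighbours of one endpoint plus one neighbour of the other) is the right flavour, but to push the constant down to $7$ you will need a handful of such replacement rules covering all the ways a single packed path can send edges into $R$, together with a separate accounting for edges between distinct packed paths inside $S$. None of this is deep, but it is fiddly, and the parity bookkeeping you mention does interact with it. Until those case analyses are written out, the bounded-degree half remains a plan rather than a proof.
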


\begin{thm}[B.-K., 2011]
\label{thm:longpath}
For $k\geq 2$, $\ell\geq 4$, and $n\geq 2\ell+2k\ell\bracks{\ceil{\frac{\ell}{2}}+1}\binom{\ell}{\floor{\frac{\ell}{2}}}$, the following holds.
\[ \ex{n;k\cdot P_\ell}\,=\,\binom{k\floor{\frac{\ell}{2}}-1}{2}+\bracks{k\floor{\frac{\ell}{2}}
-1}\bracks{n-k\floor{\frac{\ell}{2}}+1}+\mathbbm{1}_{\{\ell\textrm{ is odd}\}}.\]
\end{thm}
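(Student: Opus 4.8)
The plan is to establish matching lower and upper bounds; the lower bound is a construction, and the upper bound is where the work lies.

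\emph{Lower bound.} Put $s=k\floor{\ell/2}-1$ and let $G_0$ be the graph on $[n]$ whose edges are all pairs inside a fixed $s$-set $S$, all $s(n-s)$ pairs meeting $S$, and --- only when $\ell$ is odd --- one further pair inside $[n]\setminus S$; then $|E(G_0)|$ is exactly the claimed value. That $G_0$ contains no $k\cdot P_\ell$ follows from a parity count: in the join $K_s\vee\overline{K_{n-s}}$ the complement of $S$ is independent, so a copy of $P_\ell$ meets it in an independent set of the path, hence in at most $\ceil{\ell/2}$ vertices, whence it meets $S$ in at least $\floor{\ell/2}$ vertices; $k$ disjoint copies would then require $k\floor{\ell/2}=s+1>\verts{S}$ of them. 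When $\ell$ is odd the extra edge $e$ does not change this: deleting $e$ from a copy of $P_\ell$ that uses it leaves two subpaths on $a$ and $b$ vertices with $a+b=\ell$, each living in $K_s\vee\overline{K_{n-s}}$ and so meeting $S$ in at least $\floor{a/2}$, resp.\ $\floor{b/2}$, vertices, and $\floor{a/2}+\floor{b/2}=(\ell-1)/2=\floor{\ell/2}$, so the total $S$-count over $k$ disjoint copies is still $>\verts{S}$. (For $\ell$ even the same count can drop by one, which is exactly why no such edge may be added and why there is no $+1$ term in that case.)

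\emph{Upper bound.} We argue by induction on $k$; the base case $k=1$ uses Theorem~\ref{erdosgallai} and its known refinements for $\ex{n;P_\ell}$ when $n$ is large. Let $G$ be extremal for $k\cdot P_\ell$ on $n\ge n_0(k,\ell)$ vertices, so $|E(G)|$ is at least the claimed value. If $\nu:=\nu_\ell(G)$, the maximum number of vertex-disjoint copies of $P_\ell$ in $G$, satisfies $\nu+1<k$, then $G$ is $((\nu+1)\cdot P_\ell)$-free and the induction hypothesis gives $|E(G)|\le\ex{n;(\nu+1)\cdot P_\ell}$, which is strictly smaller than the claimed value of $\ex{n;k\cdot P_\ell}$ once $n$ is large; hence $\nu=k-1$. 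Fix a maximum family $W_1,\dots,W_{k-1}$ of disjoint $P_\ell$'s, let $W=\bigcup_i W_i$ and $U=[n]\setminus W$; by maximality $G[U]$ is $P_\ell$-free, so $e(U)\le\frac{\ell-2}{2}\verts{U}$. The heart of the argument is to locate a \emph{core} $S\subseteq[n]$ with $\verts{S}\le s$ such that $G-S$ has at most $\mathbbm{1}_{\{\ell\text{ odd}\}}$ edges (or, in the non-extremal cases, few enough edges to be absorbed). One produces $S$ by a greedy/exchange procedure: a vertex whose degree into the rest of the graph is large enough can be used, $\floor{\ell/2}$ such vertices at a time, as interleaved ``hubs'' of new disjoint copies of $P_\ell$, so no more than $s$ of them may exist without forcing a $k$-th copy; and the copies $W_i$ can be rerouted through $U$ to free up their endpoints, which is what lets one bound degrees into $W$. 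Granting such an $S$, $e(G)\le\binom{\verts{S}}{2}+\verts{S}(n-\verts{S})+e(G-S)$, and since $x\mapsto\binom{x}{2}+x(n-x)$ is increasing for $x\le s\ll n$ this is at most $\binom{s}{2}+s(n-s)+\mathbbm{1}_{\{\ell\text{ odd}\}}$, as required.

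It remains to see why $G-S$ can carry essentially no edges. If $G-S$ contained a copy of $P_\ell$ on a vertex set $A$, then --- using that $A$ avoids $S$, hence all of $A$ has bounded degree --- $G-A$ would be $((k-1)\cdot P_\ell)$-free with $e(G)\le e(G-A)+\bigo{1}\le\ex{n-\ell;(k-1)\cdot P_\ell}+\bigo{1}$, again strictly below the target (the two extremal numbers differ by $\sim\floor{\ell/2}\,n$). So $G-S$ is $P_\ell$-free, and one analyses its components together with their attachments to $S$: a component whose longest path has $\ell-1$ vertices can be completed to a $P_\ell$ by a single vertex of $S$, and if too many such components attach to $S$ one assembles $k$ disjoint copies via a Hall-type matching of components to vertices of $S$; meanwhile a run of several dense ($K_{\ell-1}$-like) components strung together through $S$ already yields one path on at least $k\ell$ vertices, hence a $k\cdot P_\ell$. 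Finally, the same parity count as in the construction --- how many vertices of $S$ a $P_\ell$ passing through a prescribed edge of $G-S$ must use --- shows that a single edge in $G-S$ is harmless precisely when $\ell$ is odd, while a second such edge, or any such edge when $\ell$ is even, can be used to save a hub and complete a $k$-th copy. I expect this last combination --- the simultaneous control of the components of $G-S$, their adjacencies into $S$, the ``bridging'' of dense components, and the parity of $\ell$ --- to be the main obstacle, and the step where the hypothesis that $n$ is large in terms of $k$ and $\ell$ is genuinely needed.
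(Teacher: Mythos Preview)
This theorem is not proved in the present paper; it is quoted in the background section as a prior result of the authors from \cite{BushawKettle}. There is therefore no in-paper proof to compare against directly.

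What the paper does contain is the proof of the hypergraph analogues (Theorems~\ref{thm:multloosediff} and~\ref{thm:multlinearsame}), and these follow the same template as the original graph argument --- a template that differs structurally from your outline. You aim to isolate a single global ``core'' $S$ of size $s=k\floor{\ell/2}-1$ by accumulating high-degree vertices via a greedy/exchange procedure, and then to analyse the component structure of $G-S$. The paper's method instead works one path at a time: given any copy of $P_\ell$ on vertex set $P$, induction on $k$ forces $n_P$, the number of edges meeting $P$, to be of order $\floor{\ell/2}\,n$; a pigeonhole argument (splitting outside vertices by how many neighbours they have in $P$, the sets $A$ and $B$ in the proofs here) then produces a set $U\subseteq P$ of only $\floor{\ell/2}$ vertices that is a common neighbour set for a positive fraction of $V\setminus P$. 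One removes $U$, finds $(k-1)\cdot P_\ell$ in $G-U$ by induction, and threads the final $P_\ell$ explicitly through $U$ using those common neighbours. No global structural analysis of a ``core complement'' is ever needed, and the size of the distinguished set is $\floor{\ell/2}$ rather than $k\floor{\ell/2}-1$.

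Your sketch has a genuine gap precisely at the step you yourself flag as ``the main obstacle'': the assertion that one can choose $S$ with $\verts{S}\le s$ so that $G-S$ carries at most $\mathbbm{1}_{\{\ell\text{ odd}\}}$ edges is essentially the full content of the theorem, and the component/Hall-matching/bridging/parity programme you describe for establishing it is not carried out. The path-by-path scheme avoids this analysis entirely, which is why it is the one that actually delivers the result with the stated bound on $n$.
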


\begin{thm}[B.-K., 2011]
\label{thm:treethm}
Let $F$ be an equibipartite forest on $2\ell$ vertices which is comprised of at least two trees. If the Erd\H{o}s-S\'os Conjecture holds for each component tree in $F$, then for $n\geq3\ell^2+32\ell^5\binom{2\ell}{\ell}$,
\[\ex{n;F}\,=\,\begin{cases} \binom{\ell-1}{2}+(\ell-1)(n-\ell+1) \textrm{, if }H\textrm{ admits a perfect matching}\\ (\ell-1)(n-\ell+1) \textrm{ otherwise.}\end{cases}\]
\end{thm}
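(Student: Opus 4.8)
First I would pin down the two extremal configurations and check each is $F$-free. Write $F = T_1\cup\dots\cup T_k$ with $k\ge2$, each $T_i$ a tree whose two colour classes both have size $a_i$, so $\sum_i a_i=\ell$. The candidates are $G_1 = K_{\ell-1}\vee\overline{K_{n-\ell+1}}$, a clique of order $\ell-1$ joined completely to an independent set, with $e(G_1)=\binom{\ell-1}{2}+(\ell-1)(n-\ell+1)$, and $G_2=K_{\ell-1,\,n-\ell+1}$, with $e(G_2)=(\ell-1)(n-\ell+1)$. In any embedding of $F$ into either graph, deleting the distinguished set $X$ of $\ell-1$ vertices leaves an independent set, so each $T_i$ must place at least $2a_i-\alpha(T_i)$ of its vertices in $X$; since $\alpha(T_i)=2a_i-\nu(T_i)$ for the bipartite graph $T_i$, this is $\nu(T_i)$ vertices, and summing gives $|X|\ge\sum_i\nu(T_i)=\nu(F)$. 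When $F$ has a perfect matching, $\nu(F)=\ell>\ell-1$, so $F\not\subseteq G_1$. Independently, since $G_2$ is bipartite and each $T_i$ is connected, any copy of $T_i$ in $G_2$ sends a full colour class of $T_i$ into $X$, costing $a_i$ vertices of $X$, and summing over the components needs $\ell>\ell-1$, so $F\not\subseteq G_2$ with no matching hypothesis. This gives the two claimed lower bounds and, crucially, identifies the condition ``$F$ has a perfect matching'' with ``$\nu(F)=\ell$'', equivalently with ``every $T_i$ has a perfect matching'' --- the equivalence the upper-bound argument must keep track of.

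\textbf{Upper bound: reduction and extraction.} Let $H$ be $F$-free on $n$ vertices with $n$ in the stated range, and suppose for contradiction that $e(H)$ exceeds the relevant bound, so in particular $e(H)>(\ell-1)(n-\ell+1)$. First I would reduce to $\delta(H)\ge\ell-1$: deleting a vertex of current degree $\le\ell-2$ removes at most $\ell-2$ edges, while the target bound for $n$ versus $n-1$ drops by $\ell-1$, so it suffices to prove the estimate for the subgraph that remains once no low-degree vertex is left (still $F$-free); a nonempty such subgraph has minimum degree $\ge\ell-1$, which forces its order to be large. Assuming $\delta(H)\ge\ell-1$, I would extract vertex-disjoint copies of the components one at a time via Erd\H{o}s--S\'os: since $e(H)>(\ell-1)(n-\ell+1)>(a_1-1)n\ge\ex{n;T_1}$, a copy of $T_1$ appears; remove its vertices and repeat with $T_2$, and so on. Either this produces disjoint copies of $T_1,\dots,T_{k-1}$, in which case the leftover must be $T_k$-free (else $F\subseteq H$), or it halts earlier with a leftover that is $T_i$-free for the component it was seeking. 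In all cases there is a set $W$ with $|W|\le 2\ell(k-1)$ and an index $i$ such that $H-W$ is $T_i$-free, so $e(H-W)\le(a_i-1)(n-|W|)$ by Erd\H{o}s--S\'os.

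\textbf{Upper bound: the structural heart.} The difficulty is that $W$ is bounded but its vertices may have degree $\Theta(n)$, so the naive count $e(H)\le e(H-W)+\sum_{w\in W}\deg w$ loses a constant factor and never reaches the exact bound --- indeed no purely edge-counting argument can, since $K_{\ell-1,n}$ already has $(\ell-1)n$ edges. The plan is to iterate the extraction more aggressively and prove a stability statement: $H$ has a set $S$ with $|S|\le\ell-1$ such that, up to $S$, the graph $H$ embeds into one of the configurations $G_1,G_2$. The bound $|S|\le\ell-1$ comes from the observation that $\ell$ vertices of large (constant) degree already suffice to host a minimum vertex cover of $F$, of size $\nu(F)\le\ell$, while the independent remainder of $F$ is completed greedily on fresh common neighbours --- and here the matching dichotomy re-enters, since in the non-matching case $\nu(F)\le\ell-1$ and one can afford to use all $\ell-1$ vertices of $S$ without completing $F$, which is exactly what deletes the $\binom{\ell-1}{2}$ term. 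This is also where the explicit lower bound on $n$ is consumed: it must absorb the $O_{\ell,k}(1)$ nested greedy embeddings, which is the origin of the $\binom{2\ell}{\ell}$-type factor. Once $H$ is shown to lie, modulo $S$, inside $G_1$ or $G_2$, the edge count is immediate and yields the two cases of the theorem.

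\textbf{Main obstacle.} I expect the structural step to be the crux: ruling out a graph that is simultaneously dense on a bounded vertex set and has average degree above $2(\ell-1)$ outside it, and keeping the matching bookkeeping consistent throughout --- including making the ``$\delta\ge\ell-1$'' reduction compatible with both forms of the answer, and handling the case $|S|<\ell-1$, where one must instead use that the leftover is then forbidden-free for a \emph{smaller} component. The Erd\H{o}s--S\'os input and the large-$n$ hypothesis are what make the greedy embeddings run, but because the bound is tight at the $(\ell-1)n$ threshold the argument must exploit the forbidden forest $F$ structurally, not merely through counting.
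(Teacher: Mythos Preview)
The paper does not prove this theorem. Theorem~\ref{thm:treethm} appears only in the background section as a result quoted from the authors' earlier paper \cite{BushawKettle}; the present paper proves the hypergraph analogues (Theorems~\ref{thm:multlinearsame}--\ref{thm:multloosediff}) and never returns to Theorem~\ref{thm:treethm}. So there is no proof here to compare your proposal against.

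On the proposal itself: your lower-bound analysis is correct and cleanly isolates the matching dichotomy via $\nu(F)$, and your upper-bound outline (min-degree reduction, iterated Erd\H{o}s--S\'os extraction of components, then a stability step pinning $H$ to $G_1$ or $G_2$ modulo a set $S$ with $|S|\le\ell-1$) is the right general shape. But the proposal is explicitly a plan, not a proof, and the step you yourself label the ``structural heart'' is not carried out. Saying that $\ell$ high-degree vertices ``suffice to host a minimum vertex cover of $F$'' while ``the independent remainder is completed greedily'' is the intended mechanism, but you have not shown that such a collection of high-degree vertices actually exists in every $F$-free $H$ above the threshold, nor that the greedy completion succeeds given only $\delta(H)\ge\ell-1$ rather than large common neighbourhoods. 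The iteration that produces the $\binom{2\ell}{\ell}$-type constant, the case $|S|<\ell-1$, and the compatibility of the min-degree reduction with the two different right-hand sides are all gestured at but not executed. As a roadmap this is on target; as a proof it has a genuine gap precisely where you identify the main obstacle.
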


With a generalization of these theorems in mind, we now proceed to a discussion of the hypergraph Tur\'an problem.

\subsection{Background for Hypergraphs}
In general, Tur\'an theory for $r$-uniform hypergraphs with $r\geq 3$ is much less developed than the theory for $2$-graphs. In the same paper in which Tur\'an proved his fundamental theorem on the extremal numbers for complete graphs \cite{Turan1}, he posed the natural question of determining $\exr{n;K_t^{(r)}}$, where $K_t^{(r)}$ denotes the complete $r$-uniform graph on $t$ vertices. Surprisingly, this problem remains open in all cases for $r>2$, even up to asymptotics. 

Determining extremal numbers precisely for hypergraphs is difficult indeed. Those results that do exist tend to be asymptotics, and exact results, with a few exceptions discussed below, are virtually always for small graphs on a few vertices. Such exact results exist (for large $n$) for the Fano plane, 4-books with 2, 3, or 4 pages, and a few other similarly small objects (see, e.g., \cite{Keevash} for a survey of hypergraph Tur\'an results). As a sample of a typical exact theorem in this area, we state the Erd\H{o}s-Ko-Rado Theorem below; this is perhaps the classical extremal result for hypergraphs \cite{EKR}.

\begin{thm}[Erd\H{o}s-Ko-Rado, 1961]
If $\cH$ is an $r$-uniform hypergraph on $n\geq 2r$ vertices in which every pair of edges intersects, then $|E(\cH)|\leq\binom{n-1}{r-1}$. That is, if we let $M_2^{(r)}$ denote the $r$-graph consisting of 2 disjoint edges, then for $n\geq2r$,
\[\exr{n;M_2^{(r)}}\,=\,\binom{n-1}{r-1}.\]
\end{thm}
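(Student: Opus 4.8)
The plan is to establish the upper bound by Katona's cyclic permutation (``circle'') method, and to match it with the obvious lower bound: the \emph{full star} — the family of all $r$-subsets of $[n]$ containing one fixed vertex — is intersecting (equivalently, $M_2^{(r)}$-free) and has exactly $\binom{n-1}{r-1}$ edges, so $\exr{n;M_2^{(r)}}\ge\binom{n-1}{r-1}$ for every $n\ge r$. It remains to prove the reverse inequality when $n\ge 2r$.

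First I would record the trivial translation between the two formulations in the statement: a hypergraph $H$ contains $M_2^{(r)}$ as a subgraph precisely when it has two disjoint edges, so ``$H$ is $M_2^{(r)}$-free'' means exactly ``every pair of edges of $H$ intersects.'' Hence it suffices to bound $|E(H)|$ for an intersecting $r$-graph $H$ on $n\ge 2r$ vertices.

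Next comes the core combinatorial lemma. Given a cyclic ordering of $[n]$ (one of the $(n-1)!$ such orderings), call an edge $E\in E(H)$ an \emph{arc} of that ordering if its $r$ vertices appear consecutively on the circle. I claim that in any single cyclic ordering at most $r$ edges of $H$ are arcs. To see this, fix one arc-edge $A$ and write $A=\{a_1,\dots,a_r\}$ in cyclic order; any arc $B\ne A$ with $B\cap A\ne\emptyset$ must, since $n\ge 2r$, overlap $A$ in an initial segment $\{a_1,\dots,a_j\}$ (extending ``backward'') or in a terminal segment $\{a_{j+1},\dots,a_r\}$ (extending ``forward'') for some $1\le j\le r-1$. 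For each such $j$ these two candidate arcs are themselves disjoint from one another — again precisely because $n\ge 2r$ — so the intersecting hypothesis rules out at least one of each pair. Thus at most $r-1$ arcs other than $A$ can lie in $H$, i.e. at most $r$ in total.

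Finally I would double count incidences between cyclic orderings of $[n]$ and arc-edges of $H$. By the lemma this count is at most $r\,(n-1)!$. On the other hand, a fixed $r$-set $E$ is an arc in exactly $r!\,(n-r)!$ cyclic orderings (order $E$ internally in $r!$ ways, then arrange this block together with the remaining $n-r$ vertices cyclically in $(n-r)!$ ways), so the count equals $|E(H)|\cdot r!\,(n-r)!$. Combining,
\[
|E(H)|\;\le\;\frac{r\,(n-1)!}{r!\,(n-r)!}\;=\;\binom{n-1}{r-1},
\]
which completes the proof. The step requiring genuine care is the lemma: the hypothesis $n\ge 2r$ enters exactly where I need the two ``opposite'' arcs overlapping $A$ in complementary segments to be disjoint, which is what drives the pairing argument. (An alternative is Daykin's deduction from the Kruskal–Katona theorem via shadows of the complement family, but the cyclic argument is shorter and self-contained.)
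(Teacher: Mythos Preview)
Your proof is correct: this is precisely Katona's circle method, and every step checks out --- in particular, the pairing of the backward arc through $\{a_1,\dots,a_j\}$ with the forward arc through $\{a_{j+1},\dots,a_r\}$ uses $n\ge 2r$ exactly as you say, and the double count $|E(H)|\cdot r!\,(n-r)! \le r\,(n-1)!$ yields the bound.

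There is, however, nothing to compare against: the paper does not prove the Erd\H{o}s--Ko--Rado theorem. It is quoted as a classical background result (with a citation to the original 1961 paper) to illustrate the flavour of exact hypergraph Tur\'an theorems, and plays no role in the later proofs. So your proposal is a correct, self-contained proof of a statement the paper merely cites.
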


We now continue to the main objects of focus in this paper: paths in hypergraphs. One can think of a matching in hypergraphs as being a forest of paths of length one. Thus perhaps starting point for this history is the following conjecture of Erd\H{o}s \cite{Erdos65}. If one considers $r$-uniform hypergraphs forbidding an $s+1$-matching, there are two natural constructions. Either one can take $r(s+1)-1$ vertices and all edges, or one can take a set of $s$ vertices and all edges intersecting this set; thus the Erd\H{o}s conjecture is as follows.

\begin{conj}[Erd\H{o}s, 1965]
\label{conj:Erdos}
Let $M_{s+1}^{(r)}$ denote ${s+1}$ disjoint $r$-edges, $A_r=\binom{[r(s+1)-1]}{r}$, and $B_r(n)=\left\{F\in\binom{[n]}{r}:F\cap[s]\neq\emptyset\right\}$. Then
\[\exr{n;M_s^{(r)}}=\max{\{|A_r|,|B_r(n)|\}}.\]
\end{conj}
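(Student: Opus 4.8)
The statement packages a lower bound and a matching upper bound, so the plan splits accordingly. For the lower bound, note that the clique $A_r = \binom{[r(s+1)-1]}{r}$ cannot contain $s+1$ pairwise disjoint $r$-sets, since $(s+1)r > r(s+1)-1$, and that in $B_r(n)$ every edge meets the fixed $s$-set $[s]$, so no matching has more than $s$ edges; padding either construction with isolated vertices up to $n$ vertices gives $\exr{n;M_{s+1}^{(r)}} \ge \max\{|A_r|,|B_r(n)|\}$. Since $|A_r| = \binom{r(s+1)-1}{r}$ is a constant while $|B_r(n)| = \binom{n}{r} - \binom{n-s}{r}$ grows like $s\binom{n-1}{r-1}$, the first construction can only win for $n$ near $r(s+1)$ and the second for larger $n$; the conjecture asserts the extremal structure is always \emph{exactly} one of these two.

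For the upper bound I would use the shifting (left-compression) technique together with induction on $n$. Each compression preserves the edge count and cannot increase the matching number (a standard but fiddly edge-exchange argument), so the extremal $r$-graph $H$ may be assumed invariant under all compressions. Two ingredients then drive the proof. (i) A shifted family with matching number at least $t$ contains $t$ disjoint edges whose union is exactly $[tr]$: take a maximum matching of minimum total vertex-weight and use shiftedness to slide any ``high'' vertex down onto an uncovered small one. In particular, if $\nu(H) = s$ then every edge of $H$ meets $[sr]$, since an edge avoiding $[sr]$ would extend the canonical $s$-matching. (ii) Fixing a vertex $x$ and writing $E(H) = H(x) \sqcup (H - x)$, with link $H(x) = \{e \setminus \{x\}: x \in e \in E(H)\}$ on the remaining $n-1$ vertices and non-link $H - x = \{e \in E(H) : x \notin e\}$, we have $\nu(H-x) \le \nu(H) \le s$, hence $|H - x| \le \exr{n-1;M_{s+1}^{(r)}}$ by induction; it then remains to choose $x$ well (a kernel vertex, using shiftedness) and to bound $|H(x)|$ tightly enough to close the induction. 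Making (ii) work — together with the base cases and the case where $H - x$ is itself near-$A_r$-like rather than near-$B_r$-like — is the technical core.

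The main obstacle is precisely the bound on $|H(x)|$ in step (ii). The trivial estimate $|H(x)| \le \binom{n-1}{r-1}$ overshoots $|B_r(n)|$ by exactly $\binom{n-1-s}{r-1}$, so recovering the sharp answer forces a real structural analysis of shifted matching-free families in order to save that slack — and this is why sharp results are known only for $n$ not too small. Erd\H{o}s's original theorem establishes the conjecture for $n \ge n_0(r,s)$ \cite{Erdos65}, and a more delicate shifting analysis lowers the threshold to $n \ge (2s+1)r - s$, but the range $r(s+1) \le n < (2s+1)r - s$ — where $|A_r|$ and $|B_r(n)|$ are genuinely comparable — remains open and appears to need a new idea, such as a fractional/LP relaxation of the matching number or a stability argument pinned at the crossover point. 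An alternative route for large $n$, closer in spirit to the delta-system method used elsewhere in this paper, is to extract a sunflower from the edge set and assemble $s+1$ disjoint edges from its core and petals; this also yields the exact bound once $n \gg r,s$ but meets the same wall for small $n$.
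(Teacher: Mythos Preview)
This statement is labeled a \emph{Conjecture} in the paper, and the paper does not give a proof of it in full generality---so there is no ``paper's own proof'' to compare against. The paper's only direct contribution here is the large-$n$ case, obtained as the specialization $\ell=1$ of Theorems~\ref{thm:multlinearsame} and~\ref{thm:multloosesame}: a matching $M_{s+1}^{(r)}$ is a forest of $s+1$ paths of length one, and the paper's general machinery then yields $\exr{n;M_{s+1}^{(r)}}=\sum_{i=1}^{s}\binom{n-i}{r-1}=|B_r(n)|$ for $n$ sufficiently large, which is exactly the conjectured value once $|B_r(n)|\ge|A_r|$.

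Your proposal is not a proof either, and you say so honestly: you give the easy lower bound, sketch the standard shifting/compression route to the upper bound, and then correctly locate the genuine obstruction---controlling the link $|H(x)|$ tightly enough to close the induction in the crossover range $r(s+1)\le n<(2s+1)r-s$. That assessment is accurate; the conjecture remains open in that range for $r\ge 4$ (the $r=3$ case was settled by Frankl, as the paper notes).

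Since both you and the paper only reach the large-$n$ regime, the interesting comparison is methodological. Your route is the classical one: left-compress, use that a shifted family with $\nu=s$ has all edges meeting $[sr]$, and induct on $n$ via a link/non-link split. The paper's route is entirely different and makes no use of shifting: it inducts on the number of components of the forbidden forest, shows that any copy of one component must absorb $\Theta(n^{r-1})$ incident edges (else the complement contains the rest of the forest by induction), and then pigeonholes to find a small ``core'' set of vertices with a dense common link, from which the missing component is rebuilt greedily. Your approach is tailored to matchings and gives better quantitative thresholds on $n$; the paper's approach is coarser quantitatively but treats matchings, loose paths, and linear paths uniformly.
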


In the same paper as his conjecture, Erd\H{o}s proved this result for $n>n_0(r,s)$; it is this result to which the main theorems of this paper correspond. In fact, both Theorem \ref{thm:multlinearsame} and Theorem \ref{thm:multloosesame}, with $\ell=1$, prove the above conjecture for large $n$. The bound on $n_0(r,s)$ has been gradually improved over the years: Bollob\'{a}s, Daykin, and Erd\H{o}s \cite{BDE} proved $n_0(r,s)\leq2r^3s$; this was later improved by Huang, Loh, and Sudakov \cite{HLS} to $n_0(r,s)\leq3r^2s$. In the case of $r=3$, there has been significant recent progress: Frankl, R\"{o}dl, and Rucinski \cite{FRR} showed $n_0(3,s)\leq 4s$, {\L}uczak and Mieczkowska \cite{LM} proved the conjecture for $r=3$ and $s>s_0$, and finally Frankl \cite{Fr} settled this case for all $s$ and $n$.

We now move to paths of longer length in hypergraphs, noting first that there are several natural generalizations of paths in graphs to paths in hypergraphs. Thus we give three different definitions of paths in hypergraphs; we present these from most general to most specific.

\begin{defi}
A \emph{Berge path} of length $\ell$ in a hypergraph $\cH$ is a family of distinct edges $\{F_1,\ldots,F_\ell\}~\subseteq~E(\cH)$ along with a family of vertices $\{v_1,\ldots,v_{\ell+1}\}\subseteq V(\cH)$ such that for each $i\in\{1,\ldots,\ell\}$, $v_i,v_{i+1}\in F_i$.
\end{defi}

\begin{defi}
A \emph{loose path} of length $\ell$ in a hypergraph $\cH$ is a family of distinct edges $\{F_1,\ldots,F_\ell\}\subseteq E(\cH)$ such that $F_i\cap F_j\neq\emptyset$ iff $|i-j|\,=\,1$. We use $\cP$ to denote the family of $r$-uniform loose paths on $\ell$ edges.
\end{defi}

\begin{defi}
A \emph{linear path} of length $\ell$ in a hypergraph $\cH$ is a family of distinct edges $\{F_1,\ldots,F_\ell\}\subseteq E(\cH)$ such that $\left\lvert F_i\cap F_j\right\rvert\,=\,1$ if $\left\lvert i-j\right\rvert\,=\,1$, and $ F_i\cap F_j\,=\,\emptyset$ otherwise. We use $\fP$ to denote an $r$-uniform linear path on $\ell$ edges. 
\end{defi}

We provide examples of Berge, loose, and linear 4-paths in Figures \ref{fig:bergepath}, \ref{fig:loosepath}, and \ref{fig:linearpath}, respectively.

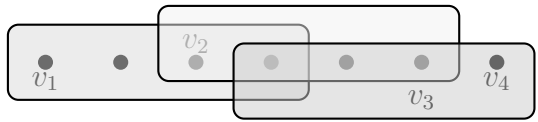
\begin{figure}
\centering
\begin{tikzpicture}[thick]
    \begin{pgfonlayer}{nodelayer}
	\foreach \x in {0,...,6}{\node (t_\x) at ($(\x,0)$){};}
    \foreach \x in {0,...,6} {\fill (t_\x) circle (0.1);} 
	\node (v_1) at ($(0,-0.25)$){$v_1$};
	\node (v_2) at ($(2,0.25)$){$v_2$};
	\node (v_3) at ($(5,-0.5)$){$v_3$};
	\node (v_4) at ($(6,-0.25)$){$v_4$};
  \end{pgfonlayer}
  \begin{pgfonlayer}{edgelayer}
  \end{pgfonlayer}
    
\begin{scope}[fill opacity=0.5, draw opacity = 1.0]
     \draw [fill=lightgray!60, rounded corners, thick] (-.5,-.5) rectangle (3.5,0.5);
     \draw [fill=lightgray!20, rounded corners, thick] (1.5,-.25) rectangle (5.5, 0.75);
     \draw [fill=lightgray!80, rounded corners, thick] (2.5, -.75) rectangle (6.5, 0.25);
     \end{scope}
\end{tikzpicture}

\caption{A 4-uniform Berge path on 3 edges.}
\label{fig:bergepath}
\end{figure}

\begin{figure}
\centering
\begin{tikzpicture}[thick]
    \begin{pgfonlayer}{nodelayer}
	\foreach \x in {0,...,8}{\node (t_\x) at ($(\x,0)$){};}
	\foreach \x in {0,...,7}{\node (b_\x) at ($(\x,-2)$){};}
    \foreach \x in {0,...,8} {\fill (t_\x) circle (0.1);}
    \foreach \x in {0,...,7}{\fill (b_\x) circle (0.1);}
  \end{pgfonlayer}
  \begin{pgfonlayer}{edgelayer}
  \end{pgfonlayer}
    
\begin{scope}[fill opacity=0.5, draw opacity = 1.0]
     \draw [fill=lightgray!80, rounded corners, thick] (-.5,-.5) rectangle (3.5,0.5);
     \draw [fill=lightgray!40, rounded corners, thick] (1.5,-.25) rectangle (5.5, 0.75);
     \draw [fill=lightgray!80, rounded corners, thick] (4.5, -.5) rectangle (8.5, 0.5);
     \draw [fill=lightgray!80, rounded corners, thick] (-.5,-2.25) rectangle (3.5,-1.5);
     \draw [fill=lightgray!80, rounded corners, thick] (1.5,-2.75) rectangle (5.5,-1.75);
     \draw [fill=lightgray!80, rounded corners, thick] (3.75,-2.5) rectangle (7.5,-1.5);
     \end{scope}
\end{tikzpicture}
\caption{Two 4-uniform loose paths, each on 3 edges.}
\label{fig:loosepath}
\end{figure}
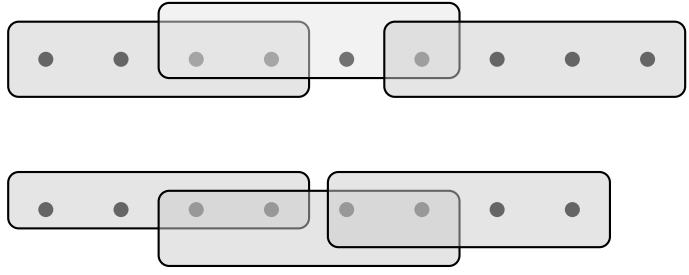

\begin{figure}
\centering
\begin{tikzpicture}[thick]
    \begin{pgfonlayer}{nodelayer}
	\foreach \x in {0,...,9}{\node (t_\x) at ($(\x,0)$){};}
    \foreach \x in {0,...,9} {\fill (t_\x) circle (0.1);}
  \end{pgfonlayer}
  \begin{pgfonlayer}{edgelayer}
  \end{pgfonlayer}
    
\begin{scope}[fill opacity=0.5, draw opacity = 1.0]
     \draw [fill=lightgray!80, rounded corners, thick] (-.5,-.5) rectangle (3.5,0.5);
     \draw [fill=lightgray!40, rounded corners, thick] (2.5,-.25) rectangle (6.5, 0.75);
     \draw [fill=lightgray!80, rounded corners, thick] (5.5, -.5) rectangle (9.5, 0.5);
     \end{scope}
\end{tikzpicture}

\caption{A 4-uniform linear path on 3 edges.}
\label{fig:linearpath}
\end{figure}
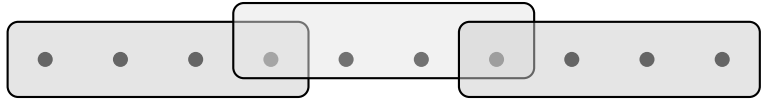

The following results were proven by F\"uredi, Jiang, and Seiver \cite{FurJiaSei}. These results are quite significant, as mentioned in the introduction, as they provide exact Tur\'an numbers for an infinite family of hypergraphs. In this sense, Theorems~\ref{thm:oneloosepath} and \ref{thm:onelinearpath} are the first of their kind.

\begin{thm}[F\"uredi-Jiang-Seiver, 2011]
\label{thm:oneloosepath}
Let $r\geq3$, $\ell\geq 1$. Letting $t\,=\,\left\lfloor\frac{\ell+1}{2}\right\rfloor-1$, $c_\ell\,=\,\mathbbm{1}_{\{\ell\textrm{ is even}\}}$, and $n$ sufficiently large,
\[
 \exr{n;\cP}\,=\,\binom{n-1}{r-1}+\binom{n-2}{r-1}+\ldots+\binom{n-t}{r-1}+c_\ell.
\]
\end{thm}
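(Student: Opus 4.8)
My first step would be to record the extremal example and check it works. Fix a set $S\subseteq[n]$ with $|S|=t=\lceil\ell/2\rceil-1$, let $H^{*}$ be the $r$-graph consisting of every $r$-set that meets $S$, and, when $\ell$ is even, adjoin one extra edge $e_{0}\subseteq[n]\setminus S$. Counting the edges of $H^{*}$ according to the least element of $S$ that an edge contains gives exactly $\sum_{i=1}^{t}\binom{n-i}{r-1}+c_{\ell}$ edges, so it remains to see that $H^{*}$ is $\cP$-free. In any loose path $F_{1}\dots F_{\ell}$ the edges whose index has a fixed parity are pairwise disjoint, and each of the two parity classes has at least $\lceil\ell/2\rceil=t+1$ members; since $t+1$ pairwise disjoint $r$-sets cannot all meet a set of size $t$ (pigeonhole on $S$), and in the even case one of the two parity classes still consists of $t+1$ pairwise disjoint edges of $H^{*}$ (the one not containing $e_{0}$), $H^{*}$ contains no member of $\cP$. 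Equivalently, the main term is just the Erd\H{o}s matching bound $\exr{n;M_{t+1}^{(r)}}$, since a loose path of length $\ell$ contains a matching of size $t+1$ and hence $M_{t+1}^{(r)}$-free implies $\cP$-free; a short separate check (two disjoint edges off $S$ can be completed to a loose path) shows $c_{\ell}$ is optimal.

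\textbf{Upper bound, overall plan.} For the upper bound I would induct on $\ell$, exploiting the identity $\exr{n;\cP}=\binom{n-1}{r-1}+\exr{n-1;\mathcal{P}_{\ell-2}^{(r)}}$ that the target formula satisfies ($t$ drops by one, $c_{\ell}$ is unchanged since parity is preserved). Let $H$ be a $\cP$-free $r$-graph on $[n]$, $n$ large, and suppose $|E(H)|$ exceeds the claimed value. The engine is the $\Delta$-system (sunflower) method: iterating the Sunflower Lemma and passing to subfamilies, I would extract a subgraph $H'\subseteq H$ with $|E(H')|\ge c(r,\ell)|E(H)|$ that is ``kernel-homogeneous'' --- there is an antichain $\mathcal{K}$ of sets of size at most $r-1$ so that every edge of $H'$ contains a member of $\mathcal{K}$, and every $Y\in\mathcal{K}$ is the core of a sunflower in $H'$ with at least $\Phi=\Phi(r,\ell)$ petals, $\Phi$ as large as we please. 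The crux is then a dichotomy: either some set $T$ of at most $t$ vertices meets every edge of $H$ except for a ``stray'' set $E_{0}$ of at most $\littleo{n^{r-1}}$ edges, or else $\mathcal{K}$ contains $t+1$ pairwise disjoint kernels, which --- using spare petals and the abundance of unused vertices to keep non-consecutive edges disjoint --- can be chained into a loose path of length $\ell$, contradicting $\cP$-freeness.

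\textbf{From structure to the exact count.} In the first horn of the dichotomy, the number of edges meeting $T$ is at most $\sum_{i=1}^{t}\binom{n-i}{r-1}$, so everything comes down to bounding $|E_{0}|$ by $c_{\ell}$. Here I would remove the vertices of $T$ one at a time, at each stage bounding the degree of the removed vertex by $\binom{n-1}{r-1}$ and applying the induction hypothesis for $\mathcal{P}_{\ell-2}^{(r)}$ (the deleted-vertex graph remains loose-path-free because, once $|T|<t+1$, a loose path of length $\ell-2$ together with the near-complete neighborhoods left at $T$ would prolong to a $\cP$). A careful comparison of the telescoped bound with $\binom{n-1}{r-1}+\exr{n-1;\mathcal{P}_{\ell-2}^{(r)}}$ then forces $|E_{0}|\le c_{\ell}$; the even/odd split appears precisely here, since for even $\ell$ a single edge disjoint from $T$ cannot be completed to a loose path of length $\ell$, while two such edges can.

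\textbf{Main obstacle.} The genuinely hard part is the combination of the $\Delta$-system reduction with the chaining step: one must prove that a kernel-homogeneous, sufficiently dense $r$-graph whose kernels are not covered by $t$ vertices really does contain a loose path of length $\ell$, with the sunflower petals supplying enough room to realize each edge of that path disjointly from all its non-neighbors. Making the constant in the sunflower reduction, the petal threshold $\Phi$, and the meaning of ``$n$ large'' mutually consistent --- and then, independently, compressing the $\bigo{n^{r-2}}$ slack in the approximate cover down to the exact constant $c_{\ell}$ --- is where essentially all the effort goes; the induction on $\ell$ and the verification of the construction above are comparatively routine.
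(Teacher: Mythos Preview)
This theorem is not proved in the present paper at all: it is quoted from F\"uredi--Jiang--Seiver as a known result and used only as the base case $k=1$ of the induction in Theorems~\ref{thm:multloosesame} and~\ref{thm:multloosediff}. So there is no ``paper's own proof'' to compare against; you have sketched a proof of a cited background theorem.

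That said, your outline is broadly faithful to the original F\"uredi--Jiang--Seiver argument: the $\Delta$-system / sunflower reduction to a kernel-homogeneous subfamily, followed by the dichotomy ``either the kernels are covered by $t$ vertices, or $t+1$ disjoint kernels can be chained into a loose path,'' is exactly their machinery. Two small corrections to your write-up. First, in the lower-bound parity argument you claim both parity classes of edges have at least $\lceil\ell/2\rceil$ members; for odd $\ell$ only the odd-indexed class attains $t+1$, the other has $t$. The conclusion survives since one class of size $t+1$ suffices. Second, your proposed induction identity is a relation satisfied by the \emph{target formula}, not an a priori recursion for $\exr{n;\cP}$; in practice F\"uredi--Jiang--Seiver argue the stability step (bounding the stray edges $E_0$ by $c_\ell$) directly rather than via a clean $\ell\mapsto\ell-2$ induction, so that part of your plan would need more care to make rigorous.
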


The unique extremal family consists of all the $r$-sets in $[n]$ which meet some fixed set $S$ of $t$ vertices, plus one additional $r$-set disjoint from $S$ when $\ell$ is even.

\begin{thm}[F\"uredi-Jiang-Seiver, 2011; Kostochka-Mubayi-Verstra\"ete, 2013]
\label{thm:onelinearpath}
Let $r\geq 3$, $\ell\geq 1$. Letting $t\,=\,\left\lfloor\frac{\ell+1}{2}\right\rfloor-1$, $d_\ell\,=\,0$ if $\ell$ is odd and $d_\ell\,=\,\binom{n-t-2}{r-2}$ if $\ell$ is even, and $n$ sufficiently large,
\[
 \exr{n;\fP}\,=\,\binom{n-1}{r-1}+\binom{n-2}{r-1}+\ldots+\binom{n-t}{r-1}+d_\ell.
\]
\end{thm}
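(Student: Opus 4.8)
The plan is to prove the lower bound by exhibiting extremal configurations and the upper bound by induction on $\ell$, following the scheme of F\"uredi--Jiang--Seiver \cite{FurJiaSei}.

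\emph{Lower bound.} Write $\ell = 2t+1$ or $\ell = 2t+2$ according to parity, so that $t = \lfloor(\ell+1)/2\rfloor-1$. Fix $S\subseteq[n]$ with $|S|=t$. If $\ell$ is odd, take $\cH$ to consist of all $r$-sets meeting $S$: then $|E(\cH)| = \binom{n}{r}-\binom{n-t}{r} = \sum_{i=1}^{t}\binom{n-i}{r-1}$, and $\cH$ is $\fP$-free since a linear path on $2t+1$ edges contains the $t+1$ pairwise disjoint edges in its odd positions, and $t+1$ pairwise disjoint $r$-sets cannot all meet a $t$-set. If $\ell$ is even, take in addition two vertices $a,b\notin S$ and adjoin every $r$-set that contains $\{a,b\}$ and is disjoint from $S$; this contributes exactly $\binom{n-t-2}{r-2}=d_\ell$ further edges. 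A linear path can use at most one adjoined edge (two of them meet in the two vertices $a$ and $b$), so removing it from a putative copy of $\fP$ leaves $2t+1$ edges all meeting $S$, which still contain $t+1$ pairwise disjoint edges --- a contradiction. Thus the lower bound matches the claimed value, and I would expect these to be the unique extremal configurations.

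\emph{Upper bound, the inductive skeleton.} Let $\cH$ be an $\fP$-free $r$-graph on $[n]$ with $n$ large. I would take as base cases $\ell=1$ (where $\exr{n;\fP}=0$ trivially) and $\ell=2$ (handled separately below). For $\ell\geq3$, fix a small constant $\delta=\delta(r)>0$, call a vertex \emph{rich} if its degree is at least $\delta\binom{n-1}{r-1}$, and let $S$ be the set of rich vertices. The engine of the induction is a \emph{routing/absorption lemma}: a rich vertex can be incorporated into any sufficiently long linear path of $\cH$, increasing its length by $2$, while avoiding any prescribed set of $\bigo{\ell r}$ vertices. (The point is that the link of a rich vertex is an $(r-1)$-graph with $\Omega(n^{r-1})$ edges, so it has $\Omega(n)$ vertices of link-degree $\Omega(n^{r-2})$, leaving ample room to splice in the new edges.) From this, two facts follow. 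First, $|S|\le t$: absorbing $t+1$ rich vertices one at a time, starting from a single edge (or a two-edge path, if $\ell$ is even), produces a linear path on at least $\ell$ edges, contradicting $\fP$-freeness. Second, writing $\cH_0$ for the sub-hypergraph of edges disjoint from $S$, the hypergraph $\cH_0$ contains no linear path on $\ell-2|S|$ edges: absorbing the $|S|$ rich vertices onto such a path (all absorptions avoiding the other $\bigo{\ell r}$ relevant vertices) would again yield $\fP\subseteq\cH$.

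\emph{Closing the induction.} When $|S|\geq1$, $\cH_0$ is thus $\ffP_{\ell-2|S|}$-free on its $n-|S|$ vertices with $\ell-2|S|<\ell$, so by induction
\[
 |E(\cH_0)|\ \leq\ \sum_{i=1}^{t-|S|}\binom{n-|S|-i}{r-1}\ +\ d_\ell,
\]
where I have used that $\lfloor(\ell-2|S|+1)/2\rfloor-1=t-|S|$ and that the lower-order term $d_{\ell-2|S|}$ computed on $n-|S|$ vertices equals $d_\ell$ computed on $n$ vertices (both vanish when $\ell$ is odd and both equal $\binom{n-t-2}{r-2}$ when $\ell$ is even). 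Since the number of edges meeting $S$ is at most $\binom{n}{r}-\binom{n-|S|}{r}=\sum_{i=1}^{|S|}\binom{n-i}{r-1}$, adding the two estimates and reindexing the second sum collapses everything to $\sum_{i=1}^{t}\binom{n-i}{r-1}+d_\ell$, as required. Two degenerate situations remain. If $|S|=0$ the induction stalls, and one argues directly: an $\fP$-free $r$-graph whose maximum degree is below $\delta\binom{n-1}{r-1}$ has its edges so spread out that an Erd\H{o}s--Gallai-type analysis of a longest linear path forces far fewer than $\sum_{i=1}^t\binom{n-i}{r-1}$ edges. For the base case $\ell=2$ (so $t=0$, $d_2=\binom{n-2}{r-2}$): $\fP$-freeness means no two edges meet in exactly one vertex, so the link of each vertex is an intersecting $(r-1)$-family and hence, by Erd\H{o}s--Ko--Rado, has at most $\binom{n-2}{r-2}$ edges; a short additional argument leveraging the equality case of Erd\H{o}s--Ko--Rado together with the global intersection constraint improves ``every degree at most $\binom{n-2}{r-2}$'' to ``at most $\binom{n-2}{r-2}$ edges in total''.

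\emph{Main obstacle.} The crux is the routing lemma and the two structural facts built on it: one must analyse the link structure of rich vertices finely enough to splice in the required edges step by step while keeping only a bounded forbidden set alive, and --- above all --- one must pin the count of edges disjoint from the core at \emph{exactly} $d_\ell$, not merely $\bigo{n^{r-2}}$, which is what makes this an exact rather than an asymptotic result. The base case $\ell=2$ and the maximum-degree-small case are each a small stability statement and carry their own technical weight. Finally, the same scheme, with the density hypothesis on links relaxed to what is appropriate for loose rather than linear paths, reproves Theorem~\ref{thm:oneloosepath}.
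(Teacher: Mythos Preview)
The paper does not prove this statement at all: Theorem~\ref{thm:onelinearpath} is quoted as a known result of F\"uredi--Jiang--Seiver (for $r\ge 4$) and Kostochka--Mubayi--Verstra\"ete (for $r=3$), and is then used only as the base case $k=1$ of the induction in the proof of Theorem~\ref{thm:multlinearsame}. There is thus no ``paper's own proof'' to compare your proposal against.

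That said, your sketch is a reasonable outline of the actual F\"uredi--Jiang--Seiver scheme: identify a small ``core'' of high-degree vertices, use the richness of their links to absorb them two-edges-at-a-time into any linear path found outside, and apply induction on $\ell$ to the remainder. The lower-bound constructions you give are exactly the extremal families described in the paper immediately after the statement of Theorem~\ref{thm:onelinearpath}. Two places in your upper-bound sketch are genuinely delicate and you flag them correctly: the $|S|=0$ (no rich vertices) regime and the base case $\ell=2$. For the latter, note that the paper does record the relevant tool, Theorem~\ref{thm:KeeMubWil} of Keevash--Mubayi--Wilson, bounding $r$-graphs with no singleton intersection; the exact bound $\binom{n-2}{r-2}$ you want for $\ell=2$ is sharper and is part of what the cited references establish. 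Your ``short additional argument'' from per-vertex Erd\H{o}s--Ko--Rado to a global edge bound is not as short as you suggest and is where the real work lies for that case.
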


For $\ell$ odd, the unique extremal family consists of all the $r$-sets in $[n]$ which meet some fixed set $S$ of $t$ vertices. For $\ell$ even, we have these edges plus all the $r$-sets in $[n]\setminus S$ containing some two fixed elements not in $S$.

In Theorem \ref{thm:onelinearpath}, the case $r\geq4$ was proved by F\"uredi, Jiang, and Seiver \cite{FurJiaSei}, and the case $r=3$ by Kostochka, Mubayi, and Verstra\"ete \cite{kostochka2013turan}. 

In addition to these results for paths, we will need the following result due to Keevash, Mubayi, and Wilson (Theorem~1.3 in \cite{keemubwil}).

\begin{thm}[Keevash-Mubayi-Wilson, 2006]
\label{thm:KeeMubWil}
Let $\cH$ be an $r$-uniform hypergraph on $n$ vertices with no singleton intersection, where $r\geq3$. Then
\[
 \left\lvert E\left(\cH\right)\right\rvert\leq\binom{n}{r-2}.
\]
\end{thm}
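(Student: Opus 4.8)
The plan is to reduce the bound on $\cH$ to a bound on one connected component, and then to analyse such a component with the help of the $t$-intersecting Erd\H{o}s--Ko--Rado theorem.

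\emph{Reduction to components.} Form the auxiliary graph on vertex set $E(\cH)$ in which two edges are adjacent whenever they meet---necessarily in at least two vertices, by hypothesis---and let $C_1,\dots,C_m$ be its connected components, with vertex sets $A_i=\bigcup_{e\in C_i}e$ and $a_i=|A_i|$. Any two intersecting edges lie in a common component, so the $A_i$ are pairwise disjoint and $\sum_i a_i\le n$. It therefore suffices to prove $|C_i|\le\binom{a_i}{r-2}$ for every $i$: since $\binom{a}{r-2}+\binom{b}{r-2}\le\binom{a+b}{r-2}$ (which follows from Vandermonde's identity, using $r-2\ge 1$), this yields $|E(\cH)|=\sum_i|C_i|\le\sum_i\binom{a_i}{r-2}\le\binom{n}{r-2}$.

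\emph{Bounding a component.} I would induct on $r$. When $r=3$, a connected no-singleton $3$-graph is automatically $2$-intersecting: if two of its edges were disjoint, a shortest path joining them in the auxiliary graph would have a middle edge meeting two disjoint pairs and hence containing at least four vertices, which is impossible. So all pairwise intersections equal $2$, and a sunflower/$\Delta$-system analysis (or Ray--Chaudhuri--Wilson) separates the ``common pair'' case from a bounded $K_4^{(3)}$-type case and gives $|C_i|\le\binom{a_i}{1}=a_i$. For $r\ge 4$ and a component $C$ on $a$ vertices: if $C$ is $2$-intersecting, the $t$-intersecting Erd\H{o}s--Ko--Rado theorem with $t=2$ gives $|C|\le\binom{a-2}{r-2}\le\binom{a}{r-2}$ (the handful of cases where $a$ is too small for that bound are checked directly). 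The substantial case is when $C$ contains two disjoint edges.

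\emph{The main obstacle.} This disjoint-edge case is the heart of the matter: such a component may still have order $\Theta(a^{r-2})$, so it cannot be thrown away, yet a naive ``delete a vertex and induct'' fails because $\binom{a}{r-2}-\binom{a-1}{r-2}=\binom{a-1}{r-3}$ is far smaller than the Erd\H{o}s--Ko--Rado bound $\binom{a-2}{r-2}$ on a single vertex-degree. I would attack it by extracting a large rigid substructure in $C$---either a maximum matching $f_1,\dots,f_s$ (so that every remaining edge meets $\bigcup_i f_i$ and meets each $f_i$ in $0$ or at least $2$ vertices), or, via the stability form of Erd\H{o}s--Ko--Rado applied to the link of a high-degree vertex, a large ``$2$-fan'' on a pair $\{u,v\}$---and then bounding carefully how the other edges can attach to it without creating a singleton intersection. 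The phenomenon to exploit is that once $a$ is large, any edge disjoint from a large $2$-fan on $\{u,v\}$ is forced to contain $u$, which then lies in every edge and returns us to the $2$-intersecting ``star'' case already settled; thus the disjoint-edge case should collapse to components that are genuinely thin relative to $a$, the delicate counting being where a bound on $n$ in terms of $r$ (when needed) enters. As a weaker but immediate fallback, all pairwise intersection sizes of $\cH$ lie in $\{0,2,3,\dots,r-1\}$, so the uniform Ray--Chaudhuri--Wilson inequality already gives $|E(\cH)|\le\binom{n}{r-1}$; pushing the exponent down by one to $\binom{n}{r-2}$ is exactly what the theorem asserts.
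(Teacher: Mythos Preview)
The paper does not prove this statement at all: Theorem~\ref{thm:KeeMubWil} is quoted from Keevash--Mubayi--Wilson (reference \cite{keemubwil} in the paper, their Theorem~1.3) and used as a black box inside Lemma~\ref{lem:pathmaking}. So there is no ``paper's own proof'' to compare your attempt to; the relevant comparison is with the original Keevash--Mubayi--Wilson argument, which is outside this paper.

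As for your proposal itself, the reduction to connected components is correct and clean: edges in different components of the auxiliary intersection graph are pairwise disjoint, and superadditivity of $\binom{\cdot}{r-2}$ then reduces the problem to bounding a single component. Your treatment of $r=3$ is also essentially right (any connected no-singleton component is $2$-intersecting, and you end up with either a sunflower on a fixed pair or $K_4^{(3)}$).

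However, the proposal is not a proof: you explicitly identify the case $r\ge 4$ with a component containing two disjoint edges as ``the main obstacle'' and ``the heart of the matter,'' and then only outline a strategy (extract a matching or a large $2$-fan, argue that remaining edges must attach rigidly) without carrying it out. You even note that Ray--Chaudhuri--Wilson only gives $\binom{n}{r-1}$ and that ``pushing the exponent down by one \dots\ is exactly what the theorem asserts.'' That is an honest admission that the core step is missing. The difficulty is real: in such a component the vertex--link bound $\binom{a-2}{r-2}$ on a single degree is already of the same order as the target $\binom{a}{r-2}$, so neither a naive deletion induction nor EKR alone closes the gap, and the actual Keevash--Mubayi--Wilson argument requires additional structural work that your sketch does not supply.
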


This gives us an upper bound on the number of edges in a hypergraph where no two edges intersect at \emph{exactly} one vertex. This will be useful to us in the case of linear paths, as the intersection of consecutive edges in these paths is precisely a single vertex.

\section{Main Results}\label{sec:main}
As mentioned in the introduction, our main results give exact Tur\'an numbers for forests of hyperpaths. We have several theorems here, dealing with forests of linear paths and loose paths, and with all paths having the same length or differing lengths. As every linear $r$-path on $\ell$ edges is isomorphic, the results for these types of paths avoid some notational difficulties associated with loose paths. Thus we state these results first, although our proofs will appear in the reverse order, as loose paths avoid the difficulties associated with singleton intersections, and thus avoid the use of Theorem~\ref{thm:KeeMubWil} above.

\begin{thm}
\label{thm:multlinearsame}
Let $r\geq 3$, $\ell\geq 1$, $k\geq 2$, and $n$ sufficiently large. Then letting $t\,=\,k\left\lfloor\frac{\ell+1}{2}\right\rfloor-1$, $d_\ell\,=\,0$ if $\ell$ is odd, and $d_\ell\,=\,\binom{n-t-2}{r-2}$ if $\ell$ is even,
\[\exr{n;k\cdot\fP}\,=\,\binom{n-1}{r-1}+\ldots+\binom{n-t}{r-1}+d_\ell.\]
\end{thm}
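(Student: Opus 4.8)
The plan is to establish the lower bound by construction and then prove the matching upper bound by induction on $k$, using the single-path result (Theorem~\ref{thm:onelinearpath}) as the base case. For the lower bound, take $S$ to be a fixed set of $t = k\lfloor\frac{\ell+1}{2}\rfloor - 1$ vertices, let $H$ consist of all $r$-sets meeting $S$, and, when $\ell$ is even, add all $r$-sets of $[n]\setminus S$ that contain two fixed vertices $x,y\notin S$. One checks that $H$ contains no copy of $k\cdot\fP$: any such copy would need $k$ vertex-disjoint linear $\ell$-paths, and a linear $\ell$-path all of whose edges meet $S$ must use at least $\lfloor\frac{\ell+1}{2}\rfloor$ vertices of $S$ (this is exactly the counting in the Füredi--Jiang--Seiver extremal analysis: consecutive edges share only one vertex, so $S$-vertices can be "reused" only by adjacent edges); $k$ disjoint such paths would then need $k\lfloor\frac{\ell+1}{2}\rfloor > t$ vertices of $S$. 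The one extra edge in the even case can participate in at most one path and does not change the count. So $|E(H)| = \sum_{i=1}^{t}\binom{n-i}{r-1} + d_\ell$ is a valid lower bound.

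For the upper bound, suppose $H$ is $k\cdot\fP$-free on $n$ vertices with $n$ large, and $|E(H)|$ exceeds the claimed value. The strategy is: either $H$ is already $(k-1)\cdot\fP \cup \fP$-free in a strong sense, or we can peel off a single path. More precisely, I would argue that if $H$ contains a copy $P$ of $\fP$ (which it must, since its edge count far exceeds $\exr{n;\fP}$), then consider $H' = H - V(P)$ on $n - r\ell$ vertices; since deleting $r\ell$ vertices removes at most $(r\ell)\binom{n-1}{r-2} = o\!\big(\binom{n-1}{r-1}\big)$ edges, $H'$ still has more than $\exr{n-r\ell; (k-1)\cdot\fP}$ edges (here one needs the inductive hypothesis to be stated with the correct $t' = (k-1)\lfloor\frac{\ell+1}{2}\rfloor - 1$ and a growing-$n$ error term under control), so $H'$ contains $(k-1)\cdot\fP$, disjoint from $P$, giving $k\cdot\fP$ in $H$ — contradiction. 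This shows $H$ has no copy of $\fP$ at all once we subtract the worst-case vertex-deletion loss, but that is too lossy to get the exact constant; so the real work is a stability/absorption refinement.

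The key refinement is to identify a small "core" set $S$ of size $t$ such that almost all edges meet $S$, then show no edge structure outside $S$ beyond $d_\ell$ can survive. I would proceed as in Füredi--Jiang--Seiver: let $S$ be a maximum set of vertices each of degree $\Omega(\binom{n}{r-1})$ (a "high-degree core"); bound $|S|$ by showing that $|S| \geq k\lfloor\frac{\ell+1}{2}\rfloor$ would let us greedily build $k$ disjoint linear paths each grabbing $\lfloor\frac{\ell+1}{2}\rfloor$ high-degree vertices and filling in the remaining vertices one at a time (possible because each high-degree vertex lies in many edges and we have only removed $o(n)$ vertices so far — this greedy embedding is where the "$n$ sufficiently large" is consumed). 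Then the edges missing $S$ form a hypergraph $H_0$ on $[n]\setminus S$ with $|S| = t = k\lfloor\frac{\ell+1}{2}\rfloor - 1$, and $H_0$ must itself avoid creating the "last unit of slack": if $\ell$ is odd, $H_0$ must be empty (any edge off $S$ extends the $k$-th path to full length), bounded via Theorem~\ref{thm:onelinearpath}'s odd case applied locally; if $\ell$ is even, $H_0$ can have at most $\binom{n-t-2}{r-2}$ edges, which is forced exactly by Theorem~\ref{thm:KeeMubWil} — $H_0$ can have no singleton intersection (a singleton intersection off $S$ would give the extra edge-pair needed to complete a linear path), so $|E(H_0)| \leq \binom{n-t}{r-2}$, and a further argument pins down the "two fixed vertices" structure to bring this down to exactly $d_\ell = \binom{n-t-2}{r-2}$.

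The main obstacle I expect is the greedy embedding step that forces $|S| \leq t$: one must show that a set of $k\lfloor\frac{\ell+1}{2}\rfloor$ high-degree vertices, together with the bulk of $H$, actually contains $k$ \emph{vertex-disjoint} linear $\ell$-paths with prescribed high-degree "hub" vertices — the disjointness across the $k$ paths, combined with the singleton-intersection requirement \emph{within} each linear path (which is more rigid than for loose paths and is exactly why the linear case needs Theorem~\ref{thm:KeeMubWil}), makes the bookkeeping delicate, and the linearity forces one to verify that enough edges meeting a given hub vertex also avoid all previously used vertices. Once that embedding lemma is in hand, the rest is the standard "core plus leftover" dichotomy, and the leftover bound is handed to us almost for free by Theorems~\ref{thm:onelinearpath} and~\ref{thm:KeeMubWil}.
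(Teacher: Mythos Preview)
Your proposal diverges substantially from the paper's argument, and the route you sketch has a real gap at the end.

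The paper does \emph{not} run a stability argument with a global high-degree core $S$. Instead, it works locally with a single copy of $\fP$: if $|E(\cH)|>a(n,r,k,\ell)$, pick any linear path $P$; since $\cH-V(P)$ cannot contain $(k-1)\cdot\fP$ (else done), induction gives $|E(\cH-V(P))|\le a(n-|P|,r,k-1,\ell)$, hence at least $\lfloor\frac{\ell+1}{2}\rfloor\,\frac{n^{r-1}}{(r-1)!}+O(n^{r-2})$ edges meet $V(P)$. An averaging over $(r-1)$-sets outside $P$ then produces a set $U\subset V(P)$ of size exactly $\lfloor\frac{\ell+1}{2}\rfloor$ with $\Omega(n^{r-1})$ common ``edge-finishing'' $(r-1)$-sets. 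Now one deletes only $U$ (not all of $V(P)$): since $|U|=\lfloor\frac{\ell+1}{2}\rfloor$, one has $|E(\cH-U)|>a(n-|U|,r,k-1,\ell)$ exactly (the drop in $t$ matches the drop in $n$), so induction yields $(k-1)\cdot\fP$ in $\cH-U$, and Lemma~\ref{lem:pathmaking} (via Theorem~\ref{thm:KeeMubWil}) builds the $k$-th linear path through $U$ disjoint from these. This avoids entirely the ``core plus leftover'' dichotomy; the exact constant falls out because the inductive step loses nothing.

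Your approach has two concrete problems. First, the warm-up deletion estimate is wrong: removing $V(P)$ can kill up to $|V(P)|\binom{n-1}{r-1}=\Theta(n^{r-1})$ edges, not $(r\ell)\binom{n-1}{r-2}$, so $H'$ need not exceed the inductive threshold --- indeed this is precisely why the paper switches to deleting only $U$. Second, and more seriously, your leftover analysis does not close. Even granting $|S|\le t$, you assert that for odd $\ell$ ``any edge off $S$ extends the $k$-th path'', but with $|S|=k\lfloor\frac{\ell+1}{2}\rfloor-1$ you are one hub short of building $k$ paths through $S$, so a single stray edge off $S$ has no path to complete; you would need to show the stray edge together with $S$ supports $k$ disjoint linear paths, which is essentially the whole problem again. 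Similarly, Theorem~\ref{thm:KeeMubWil} gives at most $\binom{n-t}{r-2}$ off-$S$ edges, not $\binom{n-t-2}{r-2}$, and the ``further argument'' you allude to is doing nontrivial work (it is the whole structural characterisation of the even extremal family). The paper's averaging-and-rebuild trick sidesteps all of this: it never needs to argue about the precise structure of a leftover.
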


\begin{thm}\label{thm:multlineardiff}
Let $r\geq 3$, $k\geq 2$, $\ell_1,\ldots,\ell_k\geq 1$, and $n$ sufficiently large. Setting $t\,=\,\sum_{i\in[k]}\left\lfloor\frac{\ell_i+1}{2}\right\rfloor-1$ and $d_l=0$ when at least one of the $\ell_i$ is odd and $d_l=\binom{n-t-2}{r-2}$ otherwise, the following holds.
\[
\exr{n;\ffP_{\ell_1},\ldots,\ffP_{\ell_k}}\,=\,\binom{n-1}{r-1}+\ldots+\binom{n-t}{r-1}+d_\ell.
\]
\end{thm}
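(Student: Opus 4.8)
The plan is to reduce Theorem~\ref{thm:multlineardiff} to Theorem~\ref{thm:multlinearsame} and Theorem~\ref{thm:onelinearpath} by a clean monotonicity-and-substitution argument. First I would record the lower bound: the same construction as in Theorem~\ref{thm:multlinearsame} works here. Fix a set $S$ of $t=\sum_{i\in[k]}\lfloor\frac{\ell_i+1}{2}\rfloor-1$ vertices, take all $r$-sets meeting $S$, and if every $\ell_i$ is even add all $r$-sets of $[n]\setminus S$ through two fixed further vertices. One checks that this hypergraph contains no disjoint family $\ffP_{\ell_1}\cup\cdots\cup\ffP_{\ell_k}$: any linear path $\ffP_{\ell_i}$ that avoids $S$ entirely lives in the remaining structure, which (being an intersecting-type or a two-vertex-cover configuration) can support a linear path of at most one edge past what $S$ already handles; a short counting on how many vertices of $S$ each path must use, exactly mirroring the single-path argument of \cite{FurJiaSei} summed over $i$, gives the bound. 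This is the routine direction.

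For the upper bound, let $H$ be an $n$-vertex $r$-graph with more than $\binom{n-1}{r-1}+\cdots+\binom{n-t}{r-1}+d_\ell$ edges; I must find disjoint copies of $\ffP_{\ell_1},\ldots,\ffP_{\ell_k}$. Order the lengths so that $\ell_1\geq\ell_2\geq\cdots\geq\ell_k$. The idea is to peel off paths one at a time from the longest to the shortest, maintaining the invariant that the as-yet-unused portion of $H$ is still edge-dense enough. Concretely, since $t>\lfloor\frac{\ell_1+1}{2}\rfloor-1$ and $H$ has more edges than the single-path extremal number $\exr{n;\ffP_{\ell_1}}$ for $\ell_1$ (here one uses that the threshold for $k$ paths strictly exceeds that for one path, for $n$ large), Theorem~\ref{thm:onelinearpath} produces a copy $P_1$ of $\ffP_{\ell_1}$. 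Delete its $\ell_1(r-1)+1$ vertices; at most $\ell_1(r-1)+1$ times $\binom{n-1}{r-2}=o(n^{r-1})$ edges are lost, so the remaining graph $H'$ on $n'=n-O_\ell(1)$ vertices still has more than $\binom{n'-1}{r-1}+\cdots+\binom{n'-t'}{r-1}+d'$ edges where $t'=\sum_{i\geq 2}\lfloor\frac{\ell_i+1}{2}\rfloor-1$ — provided $n$ was chosen large enough that the $\Theta(n^{r-2})$ slack between consecutive binomial-sum thresholds dominates the $O_\ell(n^{r-2})$ edges removed. Iterating, after step $j$ we are left with a graph that exceeds the threshold for the list $\ell_{j+1},\ldots,\ell_k$, so by induction on $k$ (the base case $k=1$ being Theorem~\ref{thm:onelinearpath}) all $k$ paths are extracted disjointly.

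The delicate point — and the main obstacle — is the bookkeeping of the parity term $d_\ell$ through this induction. When all $\ell_i$ are even, the even-bonus edges must survive the peeling; when some $\ell_i$ is odd, $d_\ell=0$ and the argument is cleaner, but one has to be careful that after removing a path the residual list may become all-even (if the one odd-length path was peeled first), which would seemingly demand a bonus that the hypothesis did not give us. The resolution is to peel the odd-length paths first: if some $\ell_i$ is odd, handle that path using only the $\binom{n-1}{r-1}+\cdots+\binom{n-t''}{r-1}$ part of the edge count (which alone exceeds $\exr{n;\ffP_{\ell_i}}$ since $t > \lfloor\frac{\ell_i+1}{2}\rfloor - 1$ strictly), leaving a still-sufficient surplus for the rest; this keeps us in the $d=0$ regime throughout. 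Conversely if every $\ell_i$ is even, the construction's bonus edges persist under vertex deletion away from the two fixed vertices, and a matching upper bound follows by applying Theorem~\ref{thm:multlinearsame}-style arguments to the all-even case, or more directly by observing that Theorem~\ref{thm:multlineardiff} with all $\ell_i$ equal is exactly Theorem~\ref{thm:multlinearsame}, and the mixed-even case interpolates. Making the "$n$ sufficiently large" quantifier explicit enough to absorb all the $O_\ell(n^{r-2})$ losses across $k$ peeling steps is the one genuinely fiddly calculation, but it is routine given that each threshold gap is of order $n^{r-2}$ while each deletion costs $O_{\ell,k}(n^{r-2})$ with a smaller constant once $n$ beats a bound depending only on $\ell_1+\cdots+\ell_k$, $k$, and $r$.
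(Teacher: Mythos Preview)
Your peeling argument contains a fatal miscount. You write that deleting the $\ell_1(r-1)+1$ vertices of the first path costs at most $(\ell_1(r-1)+1)\binom{n-1}{r-2}$ edges. But the degree of a single vertex in an $r$-uniform hypergraph on $n$ vertices can be as large as $\binom{n-1}{r-1}$, not $\binom{n-1}{r-2}$; you are off by a factor of order $n$. Consequently, deleting the whole path can destroy up to
\[
\sum_{j=1}^{\ell_1(r-1)+1}\binom{n-j}{r-1}\;\sim\;\ell_1(r-1)\,\frac{n^{r-1}}{(r-1)!}
\]
edges, while the gap between the threshold for $(\ell_1,\ldots,\ell_k)$ on $n$ vertices and the threshold for $(\ell_2,\ldots,\ell_k)$ on $n-|P_1|$ vertices is only about $\lfloor\tfrac{\ell_1+1}{2}\rfloor\,\frac{n^{r-1}}{(r-1)!}$. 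Since $\ell_1(r-1)>\lfloor\tfrac{\ell_1+1}{2}\rfloor$ for every $r\geq 2$, the edges lost swamp the threshold gap at leading order, and the residual hypergraph need not exceed the inductive threshold. (Concretely: for $k=2$ and $\ell_1=\ell_2=\ell$ one has $t\approx\ell$ but $|P_1|\approx \ell r$, so after subtracting the edges meeting $P_1$ you have already gone negative relative to the single-path threshold.) Your later remark that ``each threshold gap is of order $n^{r-2}$'' is likewise incorrect; both the gap and the loss are $\Theta(n^{r-1})$, and the constants go the wrong way.

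This is exactly the obstruction the paper's argument is designed to overcome. The paper does \emph{not} delete the entire first path. Instead it runs a dichotomy: either the path $P$ has few incident edges, in which case $H\setminus V(P)$ already exceeds the inductive threshold and we are done; or $P$ has many incident edges, and then a counting argument locates a set $U\subseteq V(P)$ of only $\lfloor\tfrac{\ell_1+1}{2}\rfloor$ vertices with $\Theta(n^{r-1})$ common ``edge-finishing'' $(r-1)$-sets. One deletes only $U$ --- this costs at most $\sum_{j=1}^{\lfloor(\ell_1+1)/2\rfloor}\binom{n-j}{r-1}$ edges, which now exactly matches the threshold drop --- finds the remaining $k-1$ paths in $H\setminus U$ by induction, and then \emph{rebuilds} a linear $\ell_1$-path through $U$ using Lemma~\ref{lem:pathmaking} to choose $1$-intersecting pairs of finishing sets that avoid the other paths. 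The rebuilding step, together with the identification of the small high-degree core $U$, is the missing idea in your proposal.
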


We now continue to forests of loose paths. Here, we need some extra notation. Our results have the same flavor as the above; we give the maximum number of edges in a graph which does not have $k$ vertex disjoint loose paths of given lengths. However, not all loose paths of a given length are isomorphic. Thus we define the following family of graphs, where each member consists of $k$ disjoint paths.
\[\mathcal{F}(k,\ell)\,=\,\{P_1\cup\ldots\cup P_k:P_i\in\cP\mbox{ for each }i\in[k]\}.\]

We also define a similar but slightly more complicated family for paths of differing lengths.
\[\mathcal{F}^\prime(\ell_1,\ell_2,\ldots,\ell_k)\,=\,\{P_1\cup\ldots\cup P_k:P_i\in\mathcal{P}_{\ell_i}^{(r)}\mbox{ for each }i\in[k]\}.\]

With these definitions, we can state our remaining results as follows.

\begin{thm}
\label{thm:multloosesame}
Let $r\geq 3$, $\ell\geq 1$, $k\geq 1$, and $n$ sufficiently large. Then letting $t\,=\,k\left\lfloor\frac{\ell+1}{2}\right\rfloor-1$, $c_\ell\,=\,\mathbbm{1}_{\{\ell\textrm{ is even}\}}$, 
\[\exr{n;\mathcal{F}(k,l)}\,=\,\binom{n-1}{r-1}+\ldots+\binom{n-t}{r-1}+c_\ell.\]
\end{thm}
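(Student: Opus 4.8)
### Proof Proposal for Theorem~\ref{thm:multloosesame}

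The plan is to establish the lower bound by an explicit construction, and the upper bound by induction on $k$, feeding off the single-path result (Theorem~\ref{thm:oneloosepath}) as the base case. For the lower bound, I would take the hypergraph $\cH_0$ consisting of all $r$-sets meeting a fixed set $S$ of $t = k\lfloor\frac{\ell+1}{2}\rfloor - 1$ vertices, together with one extra edge disjoint from $S$ when $\ell$ is even. The edge count is exactly $\binom{n-1}{r-1}+\dots+\binom{n-t}{r-1}+c_\ell$ by inclusion–exclusion on which vertices of $S$ an edge meets (this is the standard count already used in Theorem~\ref{thm:oneloosepath}). To see $\cH_0$ contains no member of $\mathcal{F}(k,\ell)$: any loose path $P\in\cP$ on $\ell$ edges uses at least $\lfloor\frac{\ell+1}{2}\rfloor$ vertices of $S$ if it avoids the lone extra edge, since a loose $\ell$-path with all edges meeting $S$ forces that many vertices of $S$ (consecutive edges share a vertex, so $S$-vertices can be "shared" only between adjacent edges, giving the floor bound — exactly the mechanism behind the single-path extremal construction). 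Since $k$ disjoint such paths would need $k\lfloor\frac{\ell+1}{2}\rfloor > t$ vertices of $S$, no such family fits; the one extra edge (when $\ell$ even) cannot help complete even a single path to a full $\ell$-path, so it changes nothing.

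For the upper bound, suppose $\cH$ on $n$ vertices has more than $\binom{n-1}{r-1}+\dots+\binom{n-t}{r-1}+c_\ell$ edges; I want to find $k$ disjoint loose $\ell$-paths. The case $k=1$ is exactly Theorem~\ref{thm:oneloosepath}. For the inductive step, the key observation is that $t = t_1 + t'$ where $t_1 = \lfloor\frac{\ell+1}{2}\rfloor - 1$ is the threshold for one path and $t' = (k-1)\lfloor\frac{\ell+1}{2}\rfloor - 1 + \lfloor\frac{\ell+1}{2}\rfloor$... more precisely, writing $s = \lfloor\frac{\ell+1}{2}\rfloor$, we have $t = ks - 1$ and the $(k-1)$-path threshold is $(k-1)s - 1$, so $t = (k-1)s - 1 + s$. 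The strategy: first locate one loose $\ell$-path $P$ in $\cH$ (possible since $\exr{n;\cP}$ is far below our edge count for large $n$ — indeed our count exceeds even $k$ times the single-path extremal number after accounting for the shared "core"). Then I would delete the $r\ell - \ell + 1$ vertices of $P$ — wait, a loose $\ell$-path has exactly $\ell(r-1)+1$ vertices — and argue the remaining hypergraph on $n' = n - (\ell(r-1)+1)$ vertices still has enough edges to contain $k-1$ disjoint loose $\ell$-paths by induction, i.e. more than $\binom{n'-1}{r-1}+\dots+\binom{n'-t''}{r-1}+c_\ell$ with $t'' = (k-1)s - 1$.

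The main obstacle is that naive deletion of one path's vertices can destroy too many edges: removing a constant number of vertices kills up to $(\ell(r-1)+1)\binom{n-2}{r-2} = \Theta(n^{r-2})$ edges, which is the same order as the gap $\binom{n-1}{r-1} - \binom{n-1-t''}{r-1}$ between consecutive thresholds — so the bookkeeping is delicate and a crude count does not close. The fix, as in \cite{FurJiaSei}, is a stability/cleaning argument: I would first pass to a subhypergraph in which every vertex has high degree (delete low-degree vertices greedily; since we are only $c_\ell = O(1)$ above a sum of binomials, the deletion process must terminate quickly with a large "dense core" $\cH'$ on $n'' = n - o(n)$ vertices, each vertex of degree $\gtrsim \binom{n-2}{r-2}/2$ say). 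In such a dense core one can greedily extend or build a loose $\ell$-path using any prescribed starting vertex, and — crucially — one can build it avoiding any prescribed set of $O(1)$ vertices, because high minimum degree means each of the $O(\ell)$ greedy choices has $\Theta(n^{r-2})$ options and only $O(1)$ are forbidden. So I would: (i) extract the dense core; (ii) inside it, peel off one loose $\ell$-path $P$; (iii) observe $\cH' - V(P)$ still has min-degree high enough and enough edges (losing only $O(n^{r-2})$ which is absorbed because the core was built to have a comfortable surplus) to invoke the induction hypothesis for $k-1$ paths; (iv) conclude. The one genuinely new wrinkle compared to $k=1$ is verifying the arithmetic $t = ks-1$ is consistent under the recursion $ks - 1 \rightsquigarrow (k-1)s - 1$ as one path is removed — which is exactly why the threshold grows by $s = \lfloor\frac{\ell+1}{2}\rfloor$ per path — and checking that the "$+c_\ell$" term, being a single edge, never needs to be tracked through the induction beyond the base case.
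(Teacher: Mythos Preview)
Your lower bound construction is correct, and the induction framework is the right skeleton. But the upper bound argument has a genuine gap in the accounting, and the proposed fix does not close it.

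First, the edge loss from deleting $V(P)$ is not $\Theta(n^{r-2})$ as you write: a single vertex can have degree as large as $\binom{n-1}{r-1}=\Theta(n^{r-1})$, so removing the $O(r\ell)$ vertices of $P$ can destroy $\Theta(r\ell\, n^{r-1})$ edges. The gap between the $k$-path threshold $h(n,r,k\ell)$ and the $(k-1)$-path threshold $h(n-|V(P)|,r,(k-1)\ell)$ is only $\Theta\bigl(\lfloor(\ell+1)/2\rfloor\,n^{r-1}\bigr)$, so the naive deletion overshoots by a factor of roughly $2r$ and the induction hypothesis cannot be invoked. Second, your minimum-degree cleaning does not help: in (a slight perturbation of) the extremal hypergraph every vertex already has degree at least $t\binom{n-2}{r-2}$, so the cleaning step is essentially vacuous; and a min-degree of order $t\binom{n-2}{r-2}$ is still too small to greedily extend a loose path while avoiding a prescribed set of $\Theta(k\ell r)$ vertices, since the number of edges through a fixed vertex meeting that forbidden set is of the same order $\Theta(k\ell r)\binom{n-2}{r-2}$.

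The paper's proof (carried out for the more general Theorem~\ref{thm:multloosediff}, of which the present theorem is the special case $\ell_1=\cdots=\ell_k=\ell$) sidesteps this by \emph{not} deleting all of $V(P)$. Instead, one shows that the edge count forces $n_P$, the number of edges meeting $V(P)$, to be at least $\frac{\lfloor(\ell+1)/2\rfloor}{(r-1)!}n^{r-1}+O(n^{r-2})$; a pigeonhole over ``edge-finishing'' $(r-1)$-sets then yields a subset $U\subseteq V(P)$ of size exactly $\lfloor(\ell+1)/2\rfloor$ that is contained in $\Omega(n^{r-1})$ edges of $\cH$. Deleting only $U$ matches the arithmetic $t=ks-1\mapsto (k-1)s-1$ \emph{exactly}, so the induction hypothesis applies to $\cH\setminus U$ and produces $k-1$ disjoint loose $\ell$-paths on some set $W$; finally, the last $\ell$-path is rebuilt from $U$ together with pairs of $1$-intersecting $(r-1)$-sets in its rich common neighbourhood, chosen (via Lemma~\ref{lem:pathmaking}) disjoint from $W$. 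The point you are missing is this ``absorber'' set $U$ of the correct size.
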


\begin{thm}
\label{thm:multloosediff}
Let $r\geq 3$, $k\geq 2$, $\ell_1,\ldots,\ell_k\geq 1$, and $n$ sufficiently large. Letting $t\,=\,\sum_{i\in[k]}\left\lfloor\frac{\ell_i+1}{2}\right\rfloor-1$ and $c_\ell\,=\,\mathbbm{1}_{\{\textrm{every $\ell_i$ is even}\}}$,

\[\exr{n;\mathcal{F}^\prime(\ell_1,\ldots \ell_k)}\,=\,\binom{n-1}{r-1}+\ldots+\binom{n-t}{r-1}+c_\ell.\]
\end{thm}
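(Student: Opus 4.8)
The plan is to reduce the problem of a forest of loose paths of differing lengths $\ell_1,\dots,\ell_k$ to the single-path result (Theorem~\ref{thm:oneloosepath}) together with the same-length forest result (Theorem~\ref{thm:multloosesame}), proceeding by induction on $k$. The lower bound is immediate: take $S$ a fixed set of $t=\sum_i\lfloor(\ell_i+1)/2\rfloor-1$ vertices, all $r$-sets meeting $S$, plus one extra $r$-set disjoint from $S$ when every $\ell_i$ is even. The construction avoids a disjoint family $P_1\cup\cdots\cup P_k$ with $P_i\in\mathcal{P}_{\ell_i}^{(r)}$ by the standard counting argument --- each loose path of length $\ell_i$ needs at least $\lfloor(\ell_i+1)/2\rfloor$ vertices of $S$ to "pay for" its edges once only one $r$-set avoids $S$, and the parities make the single extra edge unusable unless all paths are even --- so this gives $\exr{n;\mathcal{F}'(\ell_1,\dots,\ell_k)} \ge \binom{n-1}{r-1}+\cdots+\binom{n-t}{r-1}+c_\ell$.

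For the upper bound, let $H$ be an $r$-graph on $n$ vertices with $|E(H)|$ exceeding the claimed value; I must produce the forbidden forest. Order the lengths so that, say, $\ell_k$ is a length achieving some convenient extreme (largest, or chosen to control parity). The key step: since $n$ is large and $|E(H)|$ is much larger than $\exr{n;\mathcal{P}_{\ell_k}^{(r)}}$ (which is a sum of only $\lfloor(\ell_k+1)/2\rfloor-1 < t$ binomial terms), $H$ contains a copy of a loose path $P_k$ of length $\ell_k$; in fact one can greedily embed such a path avoiding any prescribed set of $O_{k,\vec\ell,r}(1)$ vertices, because removing a bounded vertex set changes the edge count by only $O(n^{r-2})$, still far above the single-path threshold. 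Then delete $V(P_k)$ and apply the inductive hypothesis to $H' = H - V(P_k)$ on $n' = n - O(1)$ vertices: one checks that $|E(H')| \ge |E(H)| - O(n^{r-2})$ still exceeds $\exr{n';\mathcal{F}'(\ell_1,\dots,\ell_{k-1})}$, the point being that dropping one length from the sum decreases $t$ by $\lfloor(\ell_k+1)/2\rfloor \ge 1$, which removes an entire $\binom{n-j}{r-1} = \Theta(n^{r-1})$ term --- dwarfing the $O(n^{r-2})$ loss. So $H'$ contains a disjoint $P_1\cup\cdots\cup P_{k-1}$, vertex-disjoint from $P_k$, completing the forbidden forest. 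The parity bookkeeping in $c_\ell$ needs care: when not every $\ell_i$ is even, pick an odd index to peel off first and arrange that the term being removed accounts for the absence of the $+1$; the inductive base case and the precise constants are handled exactly as in the same-length proof of Theorem~\ref{thm:multloosesame}.

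The main obstacle I anticipate is making the induction bookkeeping exact rather than merely asymptotic --- in particular, verifying that after removing $V(P_k)$ the edge count of $H'$, on $n'$ vertices, is \emph{strictly} above $\binom{n'-1}{r-1}+\cdots+\binom{n'-t'}{r-1}+c_{\ell'}$ with $t' = t - \lfloor(\ell_k+1)/2\rfloor$ and the correct parity constant $c_{\ell'}$, including the borderline situation where the removed path "costs" exactly the budget gap. This requires a careful choice of which length to peel (to control the parity indicator and to ensure $t'\ge 0$, i.e.\ that the remaining family is still non-trivial), and a quantitative "robust embedding" lemma for a single loose path that allows avoiding a bounded forbidden set while only using edges meeting a small core --- essentially the argument underpinning Theorem~\ref{thm:oneloosepath}, which I would either cite or re-derive in the lemma form needed here. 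Once that lemma is in place, the rest is the inductive descent outlined above, with the base case $k=1$ being exactly Theorem~\ref{thm:oneloosepath}.
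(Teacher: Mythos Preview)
Your induction step contains a genuine gap, not merely bookkeeping. The assertion that deleting a bounded vertex set from $H$ costs only $O(n^{r-2})$ edges is false: a single vertex of an $r$-graph on $n$ vertices can lie in $\binom{n-1}{r-1}=\Theta(n^{r-1})$ edges, so deleting $V(P_k)$ may cost $\Theta(n^{r-1})$ edges. With the correct order of magnitude the comparison you need collapses. A loose $\ell_k$-path can have as many as $(r-1)\ell_k+1$ vertices, so removing it may delete roughly $\frac{(r-1)\ell_k+1}{(r-1)!}\,n^{r-1}$ edges; the budget gained by dropping $\ell_k$ from the list is only $\lfloor(\ell_k+1)/2\rfloor$ terms of size $\binom{n-j}{r-1}$, i.e.\ roughly $\frac{\lfloor(\ell_k+1)/2\rfloor}{(r-1)!}\,n^{r-1}$. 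Since $r\ge 3$ the loss strictly exceeds the gain, and there is no reason for $|E(H')|$ to exceed the inductive threshold. Concretely, take $H$ to be the extremal construction (all $r$-sets meeting a fixed $t$-set $S$) plus one extra edge: any loose $\ell_k$-path you locate will use several vertices of $S$, and once those are deleted the remaining hypergraph is far too sparse for the inductive hypothesis to apply.

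This is precisely why the paper does \emph{not} delete an entire path. Having found one loose $\ell_1$-path $P$, the paper argues that either $H-V(P)$ already contains the remaining $k-1$ paths (done), or else so many edges of $H$ meet $V(P)$ that a counting argument produces a set $U\subset V(P)$ of size exactly $\lfloor(\ell_1+1)/2\rfloor$ joined to $\Theta(n^{r-1})$ common ``edge-finishing'' $(r-1)$-sets in $V(H)\setminus P$. One then deletes only $U$: because $|U|$ equals precisely the drop in $t$, the telescoping identity $\sum_{j=1}^{t}\binom{n-j}{r-1}=\binom{n}{r}-\binom{n-t}{r}$ shows $|E(H-U)|>h(n-|U|,r,\{\ell_2,\dots,\ell_k\})$, so induction supplies the remaining $k-1$ paths on some vertex set $W$. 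Finally one rebuilds an $\ell_1$-path through $U$, avoiding $W$, from the abundant finishing sets via Lemma~\ref{lem:pathmaking}. The ``robust embedding'' idea you allude to is indeed needed, but it must come \emph{after} the inductive call and through a carefully chosen set of the right size, not by peeling off an arbitrary path first.
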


The proofs for each of these results follows a standard form. Given a hypergraph with many edges (i.e. more than the claimed extremal number), we assert that any individual path of the appropriate type found in our hypergraph must have many edges incident to its vertex set; otherwise, the hypergraph induced by the vertices outside this path contains the rest of the specified forest by induction. We then examine sets of vertices which are incident to many edges. From here, we determine that there is a set of vertices which is large enough that we can construct our entire forest using  these vertices as building blocks, in the case that a particular path is found in the rest of the graph.

This is enough to give us the structural result we need, and we can then simply count potential edges. This is complicated somewhat in the case of linear paths, as we need to find edges which intersect in only a single vertex; nevertheless, our overall scheme is the same. In order to avoid redundancy, we shall prove only Theorem \ref{thm:multlinearsame} and Theorem \ref{thm:multloosediff}. The differing lengths in Theorem \ref{thm:multlineardiff} can be dealt with precisely as in the proof of Theorem \ref{thm:multloosediff}, and Theorems \ref{thm:oneloosepath}, and \ref{thm:multloosediff} together give Theorem \ref{thm:multloosesame}.

\section{Forests of Loose Paths}\label{sec:multloose}
We note some ambiguity in the base case: instead of forbidding a particular loose path of length $\ell$, the family of loose paths of length $\ell$ is forbidden. Thus, as in Theorem~\ref{thm:oneloosepath} we are not able to guarantee the existence of a \emph{particular} loose path of a given length, only that \emph{some} $\ell$-edge loose path exists.

For ease of notation, we let $t\,=\,\sum_{i\in[k]}\left\lfloor\frac{\ell_i+1}{2}\right\rfloor-1$, $c_\ell\,=\,\mathbbm{1}_{\{\textrm{every $\ell_i$ is even}\}}$, and define
\[h(n,r,\left\{\ell_1,\ldots,\ell_k\right\})\,=\,\binom{n-1}{r-1}+\ldots+\binom{n-t}{r-1}+c_\ell.\]

We note that the hypergraph on $n$ vertices that has every edge incident to a specified set $S$ of $t$ vertices, along with a single edge disjoint from $S$ when all of the paths are even, has exactly $h(n,r,\left\{\ell_1,\ldots,\ell_k\right\})$ edges and does not contain a copy of any member of  $\mathcal{F}^\prime(\ell_1,\ell_2,\ldots,\ell_k).$

We start with a lemma that will be of use in the proofs of each of our main theorems.

\begin{lem}
\label{lem:pathmaking}
Let $c$ and $t$ be positive constants and $n$ large enough. Then each $r$-hypergraph $H$ on $n$ vertices with at least $cn^r$ edges and a specified set of vertices $T\subset V(H)$ of size $|T|\leq t$ contains a pair of 1-intersecting edges that is vertex disjoint from $T$.
\end{lem}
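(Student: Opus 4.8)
The plan is to show that a hypergraph with $cn^r$ edges contains many 1-intersecting pairs, so many that deleting all edges meeting the small set $T$ cannot destroy all of them. First I would pass to the subhypergraph $H' = H - T$ on the vertex set $V(H) \setminus T$, which still has at least $n - t$ vertices. The number of edges of $H$ that meet $T$ is at most $|T|\binom{n-1}{r-1} \leq t\binom{n-1}{r-1} = O(n^{r-1})$, which is $o(n^r)$; hence for $n$ large enough $H'$ still has at least $\frac{c}{2}n^r$ edges. So it suffices to prove that any $r$-graph on $m$ vertices with at least $c' m^r$ edges (for a fixed constant $c' > 0$ and $m$ large) contains two edges intersecting in exactly one vertex.

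For this core claim I would argue by contradiction using Theorem \ref{thm:KeeMubWil}: if $H'$ had no singleton intersection, then $|E(H')| \leq \binom{m}{r-2} = O(m^{r-2})$, contradicting $|E(H')| \geq c' m^r$ once $m$ is large. That is essentially the whole argument — the Keevash–Mubayi–Wilson bound is exactly the statement that a dense enough $r$-graph must contain a 1-intersecting pair, and $c'm^r$ dwarfs $\binom{m}{r-2}$. Tracking the threshold, $n$ "large enough" just needs to satisfy $\frac{c}{2}n^r > \binom{n}{r-2}$ after accounting for the $t\binom{n-1}{r-1}$ edges removed, i.e. $n$ polynomial in $c^{-1}, t, r$; since the lemma only asks for existence of such an $n_0$, I would not optimize this.

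I do not expect a serious obstacle here: the one thing to be careful about is that Theorem \ref{thm:KeeMubWil} is stated for $r \geq 3$, which is exactly our hypothesis, and that "no singleton intersection" in that theorem means precisely "no two edges meeting in exactly one vertex," which is the complement of the conclusion we want. The only mild subtlety is bookkeeping: ensuring the edge count surviving the removal of $T$-incident edges is still $\Omega(n^r)$ and not merely positive, but since we only delete $O(n^{r-1})$ edges this is immediate for large $n$. So the "hard part" is really just citing the right theorem and checking the degree estimate $t\binom{n-1}{r-1} = o(n^r)$.
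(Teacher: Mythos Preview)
Your proposal is correct and follows essentially the same argument as the paper: delete the at most $t\binom{n-1}{r-1}=o(n^r)$ edges meeting $T$, then apply Theorem~\ref{thm:KeeMubWil} to the remaining hypergraph, whose edge count $cn^r - O(n^{r-1})$ exceeds $\binom{n-t}{r-2}$ for large $n$. The paper's version is just a terser statement of the same two steps.
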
 

\begin{proof}
The number of edges that meet $T$ is at most $t\binom{n-1}{r-1},$ so there are at least $cn^r-t\binom{n-1}{r-1}$ edges in the hypergraph $H$ restricted to vertex set $V(H)\setminus T$. As this is more than $\binom{n-t}{r-2}$ for $n$ sufficiently large, we can find a pair of 1-intersecting edges by Theorem \ref{thm:KeeMubWil}.

\end{proof}

\begin{proof}[Proof of Theorem~\ref{thm:multloosediff}]
The case $k=1$ is provided by Theorem \ref{thm:oneloosepath}. We proceed by induction on $k$; thus assume that $k\geq 2$, and that $\cH$ is a hypergraph on $n$ vertices with $|E(\cH)|\,=\,m>h(n,r,\left\{\ell_1,\ldots,\ell_k\right\})$. If any of the $\ell_i$ is even, we rearrange the list so that $\ell_1$ is even for convenience.

Since $h(n,r,\left\{\ell_1,\ldots,\ell_k\right\})>h(n,r,{\ell_1})$, for $n$ large enough, we can find at least one loose path on $\ell_1$ vertices inside $\cH$. Consider one of these $\ell_1$-paths, say on vertex set $P$. Certainly $|E(V(\cH)\setminus P)|\leq h(n-|P|,r,\left\{\ell_2,\ldots,\ell_k\right\})$, or else by induction, the graph on $V(\mathcal{H})\setminus P$ contains a member of $\mathcal{F}(\ell_2,\ldots,\ell_k)$; these alongside the loose path on $P$ form a member of $\mathcal{F}(\ell_1,\ldots,\ell_k)$.

Letting $n_P$ denote the number of edges of $\cH$ incident to vertices in $P$, we have that
\begin{align}
n_P&\geq m-h(n-|P|,r,\left\{\ell_2,\ldots,\ell_k\right\})\nonumber\\
&\geq h(n,r,\left\{\ell_2,\ldots,\ell_k\right\})-h(n-(\ell+1),r,\left\{\ell_2,\ldots,\ell_k\right\})\nonumber\\
&=\,\frac{\left\lfloor\frac{\ell_1+1}{2}\right\rfloor n^{r-1}}{(r-1)!}+O(n^{r-2}).\label{eqn:npupperMult}
\end{align}

We now focus on counting sets of vertices which can be used to easily `finish' edges started by vertices in $P$. With this in mind, for every set $R$ of $r-1$ vertices from $V(\cH)\setminus P$, we define
\[
A_{R}\,=\,\left\{E^\prime\in E\left(\cH\right): R\subseteq E^\prime\textrm{ and }E^\prime\setminus R\in P\right\}.
\]

We now break the $(r-1)$ subsets of $V(\cH)\setminus P$ into two sets, dependent on the size of their respective $A_{R}$:
\[A\,=\,\left\{R\in\left(V(\cH)\setminus P\right)^{(r-1)}: |A_{R}|\leq\left\lfloor\frac{\ell_1+1}{2}\right\rfloor-1\right\}\]
\[B\,=\,\left\{R\in\left(V(\cH)\setminus P\right)^{(r-1)}: |A_{R}|\geq\left\lfloor\frac{\ell_1+1}{2}\right\rfloor\right\}.\]

Counting edges entirely contained in $P$ and the edges incident to the sets $A$, $B$ defined above, we have that
\begin{align}
 n_P&\leq\binom{\left\lvert V(P)\right\rvert}{2}\binom{n}{r-2}+\left(\left\lfloor\frac{\ell_1+1}{2}\right\rfloor-1\right)\left\lvert A\right\rvert+\left\lvert V(P)\right\rvert\left|B\right|,\nonumber\\
&\leq\binom{\left\lvert V(P)\right\rvert}{2}\binom{n}{r-2}+\left(\left\lfloor\frac{\ell_1+1}{2}\right\rfloor-1\right)\binom{n}{r-1}+r\ell_1|B|\label{eqn:nplowerMult}.
\end{align}

By comparison of the upper and lower bounds on $n_P$, \eqref{eqn:npupperMult}, and \eqref{eqn:nplowerMult}, we have that
\begin{align}
 \left\lvert B\right\rvert\geq\frac{n^{r-1}}{r\ell_1\left(r-1\right)!}+O(n^{r-2}).\label{eqn:bsizeMult}
\end{align}

To each set \(R\in B\) we associate a set of \(\left\lfloor\frac{\ell_1+1}{2}\right\rfloor\) vertices from \(A_R\) arbitrarily. From \eqref{eqn:bsizeMult}, we see that some set of \(\left\lfloor\frac{\ell_1+1}{2}\right\rfloor\) vertices is chosen many times; here `many' is at least:
\begin{align}
\binom{\left\lvert V(P)\right\rvert}{\left\lfloor\frac{\ell_1+1}{2}\right\rfloor}^{-1}\frac{n^{r-1}}{\left(r-1\right)!r\ell_1}+O(n^{r-2})\geq\binom{r\ell_1}{\left\lfloor\frac{\ell_1+1}{2}\right\rfloor}^{-1}\frac{n^{r-1}}{\left(r-1\right)!r\ell_1}+O(n^{r-2}).\label{eqn:manyMult}
\end{align}

Thus each loose path on $\ell_1$ vertices in $\cH$ contains a subset of $\left\lfloor\frac{\ell_1+1}{2}\right\rfloor$ vertices that has many common edge-finishing $(r-1)$-sets in the rest of the graph.

Let \(U\) be such a set of $\left\lfloor\frac{\ell_1+1}{2}\right\rfloor=\ell^\prime$ vertices and $X_U$ be the set of common edge-finishing $(r-1)$-sets. Since \[\left\lvert E\left(V\left(\cH\right)\setminus U\right)\right\rvert>h(n-\ell^\prime,r,\{\ell_2,\ldots,\ell_k\}),\] we can find $k-1$ vertex disjoint loose paths of appropriate lengths on vertices inside $V(\cH)\setminus U$, say on vertex set $W$ with $|W|<(\ell_2 +\ldots+\ell_k)r$. 

We shall now find a loose path of with $\ell_1$ edges all of which are of the form $X^\prime \cup \{u\},$ where $X^\prime\in X_U$ and $u\in U.$ Applying Lemma \ref{lem:pathmaking} to the $(r-1)$-hypergraph on vertex set $V(\mathcal{H})\setminus U$ and edge set $X_U,$ with $c<\frac{1}{(r-1)!r\ell_1\binom{r\ell_1}{\ell^\prime}}$ and $t=(\ell_2+\ldots +\ell_k)r+\ell^\prime(2r-3),$ we see from Equation \ref{eqn:manyMult} that we can find a pair of 1-intersecting edges $Y_1$ and $Z_1$ disjoint from $W.$ We can repeat this argument $\ell^\prime$ more times to find vertex disjoint 1-intersecting edges $Y_2,Z_2,\ldots,Y_{\ell^\prime+1},Z_{\ell^\prime+1},$ all also disjoint from $W.$ Setting $U~=~\{u_1,u_2,\ldots, u_{\ell^\prime}\},$ we can make a loose path of length $\ell_1,$ with edges 
\[Z_1\cup\{u_1\},Y_2\cup\{u_1\},Z_2\cup\{u_2\},\ldots,Z_{\ell^\prime}\cup\{u_{\ell^\prime}\},Y_{\ell^\prime+1}\cup\{u_{\ell^\prime}\},\] where the last edge is only required if $\ell_1$ is odd. In fact this path is not just loose but also linear. Thus we have constructed $k$ disjoint loose paths and so our initial graph can not have more than $h(n,r,\{\ell_1,\ldots,\ell_k\})$ edges.
\end{proof}

\begin{remark}
As mentioned in the proof sketch, a major step is examining sets of vertices which are incident to many edges. From here, we determine that there is a set of vertices which is large enough that we can construct our entire forest using these vertices as building blocks, in the case that a particular path is found in the rest of the graph. When proving Theorem \ref{thm:multlineardiff}, this step could also be deduced from Theorem 6.2 of \cite{FurJiaSei} for $r\geq 4$; embedding the necessary linear forests can then be carried out as above.
\end{remark}

\section{Multiple Linear Paths}\label{sec:multlinear}
We now proceed to forests of linear paths. The techniques are similar to the proofs for loose paths, but as the intersection of edges in linear paths have a particular shape (i.e. just a single vertex), we require some extra tools. The difference arises in the last steps, where we are building a linear path out of common neighborhoods. Instead of simply taking any two intersecting edges, as in the case for loose paths, we need to find edges which intersect appropriately for building linear paths.

\begin{proof}[Proof of Theorem~\ref{thm:multlinearsame}]
For ease of notation, we define $a(n,r,k,\ell)\,=\,\binom{n-1}{r-1}+\ldots+\binom{n-t}{r-1}+d_\ell$.

The case $k=1$ is provided by Theorem \ref{thm:onelinearpath}. We proceed by induction on $k$. Let $k\geq 2$, and let $\cH$ be a hypergraph on $n$ vertices and with $|E(\cH)|\,=\,m>a(n,r,k,\ell)$. Since $a(n,r,k,\ell)>a(n,r,1,\ell)$, for $n$ large enough, we can find at least one linear path inside $\cH$.

As in Section \ref{sec:multloose}, consider one of these linear paths, say on vertex set $P$. Certainly $|E(V(\cH)\setminus P)|\leq a(n-|P|,r,k-1,\ell)$, or else by induction, the graph on $V(H)\setminus P$ contains $(k-1)\cdot\fP$; these along with the linear path on $P$ form $k\cdot\fP$. As before, we let $n_P$ denote the number of edges of $\cH$ incident to vertices in $P$; by identical calculations, we have that
\begin{align}
n_P\geq\,\frac{\left\lfloor\frac{\ell+1}{2}\right\rfloor n^{r-1}}{(r-1)!}+O(n^{r-2}).\label{eqn:npupperlin}
\end{align}

Again we focus on counting sets of vertices which can be used to build edges started by vertices in $P$, defining $A_R$, $A$, and $B$ identically to the proof for loose paths; by the same counting arguments, we get that
\begin{align}
 \left\lvert B\right\rvert\geq\frac{\frac{n^{r-1}}{\left(r-1\right)!}+O(n^{r-2})}{r\ell}.\label{eqn:bsizelin}
\end{align}

To each set \(R\in B\) we now associate a set of \(\left\lfloor\frac{\ell+1}{2}\right\rfloor\) vertices from \(A_R\) arbitrarily. From \eqref{eqn:bsizelin}, we see that some set $U$ of \(\left\lfloor\frac{\ell+1}{2}\right\rfloor\) vertices is chosen many times; here `many' is again at least:
\begin{align}
\binom{r\ell}{\left\lfloor\frac{\ell+1}{2}\right\rfloor}^{-1}\frac{n^{r-1}}{\left(r-1\right)!r\ell}+O(n^{r-2}).\label{eqn:manylin}
\end{align}

Thus, as before, each linear path found in $\cH$ has a set of $\left\lfloor\frac{\ell+1}{2}\right\rfloor$ vertices which have many common edge-finishing $(r-1)$-sets in the rest of the graph, and we can again find  $(k-1)\cdot\fP$ on vertices inside $V(\cH)\setminus U$, say on vertex set $W$.

We are now in the same position we were at the end of the proof of Theorem \ref{thm:multloosediff} and so we can construct a linear path with $\ell$ edges and thus we have constructed $k\cdot\fP$.

We note that the hypergraph on $n$ vertices in which each edge is incident to a specified set $S$ of $t$ vertices, along with all edges disjoint from $S$ containing some two fixed vertices not in $S$ when $k$ is even, gives a graph with exactly $\binom{n-1}{r-1}+\ldots+\binom{n-t}{r-1}+d_\ell$ edges and without $k$ vertex disjoint linear paths of length $\ell$; thus our result gives the exact value of the extremal function.
\end{proof}

Modifying the above proof in the same manner as was used to deal with differing path lengths in the proof of  Theorem~\ref{thm:multloosediff}, we obtain Theorem~\ref{thm:multlineardiff} for multiple linear paths of varying lengths; the lower bound is again given by the hypergraph on $n$ vertices in which each edge is incident to at least one of a specified set $S$ of $t$ vertices, along with all edges disjoint from $S$ containing some two fixed elements not in $S$ when each of the $\ell_i$ is even.

\section{Open Problems}

Using different methods, Füredi and Jiang \cite{FJ} (for $r\geq 5$) and subsequently Kostochka, Mubayi, and Verstra\"ete \cite{kostochka2013turan} (for all $r\geq 3$), found the extremal number of the linear cycle $C_{\ell}^{(r)}$, the hypergraph formed from $L_{\ell-1}^{(r)}$ and one other edge that shares exactly one vertex with each of the two end edges of $L_{\ell-1}^{(r)}$, for all $r$ and $l$. In particular for $r\geq 3$, $l\geq 4$, and $(r,l)\neq (3,4)$, the extremal number satisfies,

\[
\exr{n;C_{\ell}^{(r)}} = \exr{n;L_{\ell}^{(r)}},
\]
and the extremal hypergraphs are the same as well.

Using similar methods to those in this paper, the `eventual extremal number' (that is, the extremal number for large $n$) for hypergraphs consisting of linear cycles and linear paths can be determined. The main reason this is possible is because of the common structure shared by the extremal hypergraph for linear paths and linear cycles.

\begin{probl}What hypergraphs have an eventual extremal hypergraph structure similar to the linear path? That is, consist of every edge adjacent to a set of $t$ vertices, and $o(n^{r-1})$ other edges. \end{probl}

As well as paths, there are several notions of trees in hypergraphs as well. We define the tight $r$-tree inductively as follows. Every $r$-graph consisting of a single edge is an $r$-tree. Suppose that $\mathcal{T}$ is an $r$-tree, and $E^\prime\in E(\mathcal{T})$. Then for any $S\in\binom{E^\prime}{r-1}$ and $v\not\in V(\mathcal{T})$, the tree defined with edge set $E(\mathcal{T})\cup\{S\cup\{v\}\}$ is a tight $r$-tree. 

Using this definition, Kalai (see, e.g., \cite{FF}) proposed the following generalization of the Erd\H{o}s-S\'os Conjecture.

\begin{conj}[Kalai, 1984] Let $r\geq 2$, and let $\mathcal{T}$ be a tight $r$-tree on $\ell$ vertices. Then for $n$ sufficiently large,
\[\exr{n;\mathcal{T}}\,\leq\,\frac{\ell-r}{r}\binom{n}{k-1}.\]
\end{conj}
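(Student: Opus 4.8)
I should say at the outset that this conjecture contains the Erd\H{o}s--S\'os conjecture as its $r=2$ case, so what follows is the line of attack I would pursue together with the point at which it stalls, rather than a proof I expect to complete. Write $m\,=\,\ell-r+1$ for the number of edges of the tight $r$-tree $\mathcal{T}$, and note that, reading the binomial coefficient as $\binom{n}{r-1}$, the hypothesis $\verts{E(\cH)}>\frac{\ell-r}{r}\binom{n}{r-1}$ says exactly that the average codegree $\frac{r\verts{E(\cH)}}{\binom{n}{r-1}}$ of an $(r-1)$-subset of $V(\cH)$ exceeds $m-1$. The inductive definition of a tight tree --- each new edge is an $(r-1)$-subset $S$ of an existing edge together with a fresh vertex --- makes the natural approach clear: build a copy of $\mathcal{T}$ greedily, one edge at a time, peeling $\mathcal{T}$ down through its leaf edges.

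Concretely, the plan is: (i) reduce from $\cH$ to a large subgraph $\cH_0$ in which every $(r-1)$-set lying in an edge has a large link (many vertices completing it to an edge of $\cH_0$); (ii) embed $\mathcal{T}$ into $\cH_0$ by induction on its number of edges, with the trivial base case $\ell=r+1$ (a single edge), and at each inductive step choosing a fresh $(r-1)$-subset of an already-placed edge and a new vertex from its link, avoiding the at most $\ell-1$ vertices used so far; and (iii) keep the bookkeeping so that, whenever we need a copy of a smaller tight tree inside what remains, we may take it disjoint from any prescribed bounded vertex set. Step (iii) is exactly the kind of argument run in Sections~\ref{sec:multloose}--\ref{sec:multlinear} of the present paper (bound the number of edges incident to a partial embedding, then show that a small vertex set has many common edge-completing $(r-1)$-sets), and Lemma~\ref{lem:pathmaking} is a representative tool; for the tight path this scheme should at least reproduce the asymptotics of the known Tur\'an number, which is consistent with $\frac{\ell-r}{r}\binom{n}{r-1}$.

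The hard part --- and the reason the conjecture is open --- is step (i). A naive greedy embedding needs every $(r-1)$-set used during the construction to have link of size at least $m$ (when the $i$-th edge is attached, at most $i-1\le m-1$ already-placed vertices can clash with the new vertex), i.e. it needs a subgraph of minimum \emph{positive} codegree at least $m$; we are only handed \emph{average} codegree exceeding $m-1$. The standard fix --- iteratively deleting all edges through an $(r-1)$-set of positive codegree below $m$ --- can destroy as many as $m\binom{n}{r-1}$ edges, a factor of about $r$ more than the $\frac{m-1}{r}\binom{n}{r-1}$ we are guaranteed, so it may wipe out the whole hypergraph; moreover, extremal-type configurations genuinely have nearly all $(r-1)$-codegrees bounded, so no subgraph of large minimum positive codegree need exist at all. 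Overcoming this requires, in effect, regularity-and-absorption technology: reserve a small absorbing configuration able to complete any near-embedding of $\mathcal{T}$, embed the bulk of $\mathcal{T}$ into a dense (quasi)regular cluster structure via a defect Hall argument or a random greedy process, and absorb the leftover vertices --- precisely the program Ajtai--Koml\'os--Simonovits--Szemer\'edi carry out for $r=2$, for which even the right hypergraph notion of regularity and the analogue of the connectivity reductions behind the Erd\H{o}s--Gallai argument are unclear. A realistic intermediate target, reachable with essentially the methods of this paper, is Kalai's bound for tight trees of bounded ``trunk'' (a tight path together with a few extra leaf edges), where only boundedly many high-codegree $(r-1)$-sets are ever needed.
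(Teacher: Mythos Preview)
There is nothing to compare against: the paper states Kalai's conjecture as an \emph{open} conjecture and gives no proof whatsoever (the surrounding text explicitly says ``This conjecture remains open in virtually all cases''). You correctly recognize this in your first sentence, and your write-up is appropriately framed as a sketch of an approach together with the obstruction, not as a claimed proof. That is the right thing to do here.

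Your discussion is sound. You correctly note that the $r=2$ case is the Erd\H{o}s--S\'os conjecture, you correctly interpret the evident typo in the displayed bound (reading $\binom{n}{k-1}$ as $\binom{n}{r-1}$), and your identification of the real difficulty --- passing from average codegree above $m-1$ to a structure with large minimum positive codegree, which naive deletion cannot achieve --- is the standard and accurate explanation of why the greedy embedding stalls. The one case the paper does cite as known, namely tight trees containing an edge that meets every other edge in $r-1$ vertices (the ``star'' case of Frankl--F\"uredi), is precisely a situation where only a single $(r-1)$-set needs large link, so your step (i) becomes trivial there; this is consistent with your analysis.

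In short: no proof exists in the paper, you do not pretend to supply one, and your account of the obstacles is accurate. Nothing further is required.
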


This conjecture remains open in virtually all cases. For general $r$, it is proven only in the case of trees containing an edge intersecting every other edge in $k-1$ vertices; thus the tree is essentially a star \cite{FF}.

\begin{probl} What are the extremal numbers for hyperforests containing non-path components?\end{probl}

This seems to be a quite difficult problem. There are extraordinarily few examples of even individual non-path hypertrees for which the extremal numbers are known (see, e.g., \cite{BushawThesis}, \cite{Keevash} for a survey).

\bibliographystyle{amsplain}
\bibliography{HypergraphTuran2_1}

\providecommand{\bysame}{\leavevmode\hbox to3em{\hrulefill}\thinspace}
\providecommand{\MR}{\relax\ifhmode\unskip\space\fi MR }
\providecommand{\MRhref}[2]{%
  \href{http://www.ams.org/mathscinet-getitem?mr=#1}{#2}
}
\providecommand{\href}[2]{#2}
\begin{thebibliography}{10}

\bibitem{AKSS1}
M.~Ajtai, J.~Koml\'os, M.~Simonovits, and E.~Szemer\'edi, \emph{On the
  approximative solution of the {E}rd{\H{o}}s-{S}\'os conjecture on trees},
  Manuscript.

\bibitem{AKSS3}
\bysame, \emph{The solution of the {E}rd{\H{o}}s-{S}\'os conjecture for large
  trees}, Manuscript.

\bibitem{AKSS2}
\bysame, \emph{Some elementary lemmas on the {E}rd{\H{o}}s-{S}\'os conjecture
  for trees}, Manuscript.

\bibitem{AKS}
N.~Alon, M.~Krivelevich, and B.~Sudakov, \emph{Tur\'an numbers of bipartite
  graphs and related {R}amsey-type questions}, Combin. Probab. Comput.
  \textbf{12} (2003), no.~5-6, 477--494, Special issue on Ramsey theory.

\bibitem{EGT}
B.~Bollob{\'a}s, \emph{Extremal graph theory}, London Mathematical Society
  Monographs, vol.~11, Academic Press Inc. [Harcourt Brace Jovanovich
  Publishers], London, 1978.

\bibitem{MGT}
\bysame, \emph{Modern graph theory}, Graduate Texts in Mathematics, vol. 184,
  Springer-Verlag, New York, 1998.

\bibitem{BDE}
B.~Bollob{\'a}s, D.~E. Daykin, and P.~Erd{\H{o}}s, \emph{Sets of independent
  edges of a hypergraph}, Quart. J. Math. Oxford Ser. (2) \textbf{27} (1976),
  no.~105, 25--32.

\bibitem{BushawThesis}
N.~Bushaw, \emph{Problems in extremal combinatorics.}, Ph.D. thesis, University
  of Memphis, 2012.

\bibitem{BushawKettle}
N.~Bushaw and N.~Kettle, \emph{Tur\'an numbers of multiple paths and
  equibipartite forests}, Combin. Probab. Comput. \textbf{20} (2011), 837--853.

\bibitem{ErdosSos}
P.~Erd\H{o}s, \emph{Some problems in graph theory}, Theory of Graphs and its
  Applications (1963), 29--36.

\bibitem{ErdosStone}
P.~Erd\H{o}s and A.~H. Stone, \emph{On the structure of linear graphs}, Bull.
  Amer. Math. Soc. \textbf{52} (1946), 1087--1091.

\bibitem{Erdos65}
P.~Erd{\H o}s, \emph{A problem on independent {$r$}-tuples}, Ann. Univ. Sci.
  Budapest. E\"otv\"os Sect. Math. \textbf{8} (1965), 93--95.

\bibitem{EG}
P.~Erd{\H{o}}s and T.~Gallai, \emph{On maximal paths and circuits of graphs},
  Acta Math. Acad. Sci. Hungar \textbf{10} (1959), 337--356 (unbound insert).

\bibitem{EKR}
P.~Erd{\H{o}}s, C.~Ko, and R.~Rado, \emph{Intersection theorems for systems of
  finite sets}, Quart. J. Math. Oxford Ser. (2) \textbf{12} (1961), 313--320.

\bibitem{Fr}
P.~Frankl, \emph{On the maximum number of edges in a hypergraph with given
  matching number}, ArXiv e-prints (2012).

\bibitem{FF}
P.~Frankl and Z.~F{\"u}redi, \emph{Exact solution of some {T}ur\'an-type
  problems}, J. Combin. Theory Ser. A \textbf{45} (1987), no.~2, 226--262.

\bibitem{FRR}
P.~Frankl, V.~R\"odl, and A~Ruci\'nski, \emph{On the maximum number of edges in
  a triple system not containing a disjoint family of a given size},
  Combinatorics, Probability and Computing, To Appear.

\bibitem{FJ}
Z.~F{\"u}redi and T.~Jiang, \emph{Hypergraph {T}ur\'an numbers of linear
  cycles}, J. Combin. Theory Ser. A \textbf{123} (2014), no.~1, 252--270.

\bibitem{FurJiaSei}
Z.~{F\"uredi}, T.~{Jiang}, and R.~{Seiver}, \emph{{Exact solution of the
  hypergraph Tur\'an problem for $k$-uniform linear paths}}, Combinatorica, To
  Appear.

\bibitem{FSSurvey}
Z.~F\"uredi and M.~Simonovits, \emph{The history of degenerate (bipartite)
  extremal graph problems}, ArXiv e-prints (2013).

\bibitem{HLS}
H.~Huang, P.~Loh, and B.~Sudakov, \emph{The size of a hypergraph and its
  matching number}, Combin. Probab. Comput. \textbf{21} (2012), no.~3,
  442--450.

\bibitem{Keevash}
P.~Keevash, \emph{Hypergraph {T}ur\'an problems}, Surveys in combinatorics
  2011, London Math. Soc. Lecture Note Ser., vol. 392, Cambridge Univ. Press,
  Cambridge, 2011, pp.~83--139.

\bibitem{keemubwil}
P.~Keevash, D.~Mubayi, and R.M. Wilson, \emph{Set systems with no singleton
  intersection}, SIAM J. Discrete Math. \textbf{20} (2006), 1031--1041
  (electronic).

\bibitem{kostochka2013turan}
A.~Kostochka, D.~Mubayi, and J.~Verstra{\"e}te, \emph{Turan problems and
  shadows {I}: Paths and cycles}, ArXiv e-prints (2013).

\bibitem{LM}
T.~{\L}uczak and B.~Mieczkowska, \emph{On {E}rd{\H o}s' extremal problem on
  matchings in hypergraphs}, ArXiv e-prints (2012).

\bibitem{Turan1}
P.~Tur{\'a}n, \emph{Eine {E}xtremalaufgabe aus der {G}raphentheorie}, Mat. Fiz.
  Lapok \textbf{48} (1941), 436--452.

\end{thebibliography}

\end{document}